\newcommand{\dd}{\mathrm{d}}
\newcommand{\E}{\mathbb{E}}
\newcommand{\pp}{\mathbb{P}}
\newcommand{\1}{\textbf{1}}
\newcommand{\R}{\mathbb{R}}
\newcommand{\p}[1]{\mathbb{P}\left( #1 \right)}
\newcommand{\scal}[2]{\left\langle #1, #2 \right\rangle}
\newcommand{\red}{}
\DeclareMathOperator{\Var}{Var}
\DeclareMathOperator{\vol}{vol}
\def\thm@space@setup{%
  \thm@preskip=12pt plus 0pt minus 0pt
  \thm@postskip=0pt plus 0pt minus 0pt
}
\xpatchcmd{\proof}{6\p@\@plus6\p@\relax}{\z@skip}{}{}
\newtheorem{theorem}{Theorem}
\newtheorem{lemma}[theorem]{Lemma}
\newtheorem{corollary}[theorem]{Corollary}
\newtheorem{proposition}[theorem]{Proposition}
\theoremstyle{remark}
\newtheorem{remark}[theorem]{Remark}
\theoremstyle{definition}
\title{A R\'enyi entropy interpretation of anti-concentration and noncentral sections of convex bodies}
\author{James Melbourne}
\address{(JM) Department of Probability and statistics, Centro de Investigacion en matem\'aticas (CIMAT), Mexico.}
\author{Tomasz Tkocz}
\address{(TT) Carnegie Mellon University; Pittsburgh, PA 15213, USA.}
\author{Katarzyna Wyczesany}
\address{(KW) {\red School of Mathematics, University of Leeds, Leeds LS2 9JT, UK}}
\email{ttkocz@math.cmu.edu}
\thanks{TT's research supported in part by NSF grant DMS-2246484.}
\begin{document}

\begin{abstract} 
We extend Bobkov and Chistyakov's  (2015) upper bounds on concentration functions of sums of independent random variables to a multivariate entropic setting. The approach is based on pointwise estimates on densities of sums of independent random vectors uniform on centred Euclidean balls. In this vein, we also obtain sharp bounds on volumes of noncentral sections of isotropic convex bodies.
\end{abstract}

\maketitle

\bigskip

\begin{footnotesize}
\noindent {\em 2020 Mathematics Subject Classification.} Primary 60E05, 60E15; Secondary 52A40.

\noindent {\em Key words}. {\red Concentration} function, Sums of independent random variables, R\'enyi entropy, Anti-concentration, Sections of convex bodies, Pointwise lower bounds  on convolutions.
\end{footnotesize}

\bigskip

\section{Introduction}

Anti-concentration is a phenomenon which asserts that random variables have a ``small" probability of falling within a certain range, or in other words, it quantifies the scatter of the values of the random variable. In particular, one is interested in the rate of increase of anti-concentration of a sum of independent random variables, which we further address in this note in an entropic multivariate setting.  

More precisely, for a random vector $X$ taking values in $\R^d$, we define its \emph{concentration function} $Q_X \colon [ 0,\, +\infty) \to [0,1]$ as
\[
Q_X(\lambda) = \sup_{x \in \R^d} \p{|X-x| \leq \lambda}, \qquad \lambda \geq 0,
\]
where $|\cdot|$ is the standard Euclidean norm on $\R^d$. The anti-concentration phenomenon has been quantified in a number of classical results, and can be traced back to works of Doeblin, L\'evy, Kolmogorov, \cite{Do, Kol, Le}. Rogozin's inequality from \cite{Rog} strengthened all those and it states that there is a universal positive constant $C$ such that for independent random variables $X_1, \dots, X_n$, their sum $S = X_1 + \dots + X_n$ and positive parameters $\lambda_1, \dots, \lambda_n$, we have
\[
Q_S(\lambda) \leq C{\red \lambda}\left(\sum_{j=1}^n \lambda_j^2\Big(1-Q_{X_j}(\lambda_j)\Big)\right)^{-1/2}, \qquad \lambda \geq \max_{j\leq n} \lambda_j.
\]
Esseen in \cite{Ess} offered an analytic approach based on characteristic functions. This led to further improvements, by Kesten in \cite{Kes1, Kes2}, as well as Postnikova and Yudin in \cite{PY}, culminating in a bound improving upon all previous ones, established by Miroshnikov and Rogozin in \cite{MRog}, which gives
\[
Q_S(\lambda) \leq C{\red \lambda}\left(\sum_{j=1}^n \lambda_j^2D_{X_j}(\tfrac12\lambda_j)Q_{X_j}(\lambda_j)^{-2}\right)^{-1/2}, \qquad \lambda \geq \frac12\max_{j\leq n} \lambda_j,
\]
where $D_X(\lambda) = \lambda^{-2}\, \E[\min\{|X|, \lambda\}^2]$. Note that $D_X \leq 1$. Recently, Bobkov and Chistyakov in \cite{BCh} have further strengthened this inequality by removing the factors $D_{X_j}$ at the expense of shrinking the domain $\lambda \gtrsim \max \lambda_j$ to $\lambda \gtrsim \left(\sum \lambda_j^2\right)^{1/2}$, which is necessary for such a modified inequality to hold (see their remark before Theorem 1.2 in \cite{BCh}). Namely, they obtain the inequality
\begin{equation}\label{eq:BCh}
Q_S(\lambda) \leq C{\red \lambda}\left(\sum_{j=1}^n \lambda_j^2Q_{X_j}(\lambda_j)^{-2}\right)^{-1/2}, \qquad \lambda \geq \left(\textstyle\sum_{j=1}^n \lambda_j^2\right)^{1/2},
\end{equation}
with a universal positive constant $C$.
They were motivated by two-sided bounds on the concentration function of sums of log-concave random variables. Crucially for their approach, they have obtained  a uniform bound on the density of the sum of independent uniform random variables. {\red This bound can be naturally restated in geometric terms as the statement that the volume of \emph{every} section of the cube $[-1,1]^n$ by a hyperplane at distance at most $1$ from the origin is large: at least a universal fraction of the volume of the cube.}

The aim of this note is to extend these results to higher dimensions, as well as provide a new extension of those to R\'enyi entropies, which continues the recent body of works devoted to developing subadditivity properties for sums of independent random variables in various settings, see for instance \cite{BCh-Renyi, BM, BMM, MMR}. Our approach has incidentally led us to a curious sharp lower bound on noncetral sections of isotropic convex bodies, which may be of independent interest.

\subsection{Noncentral sections}

Our first main result is the following uniform bound.

\begin{theorem}\label{thm:low-bd}
Let $d \geq 1$. Let $U_1, U_2, \dots$ be i.i.d. random vectors uniform on the unit Euclidean ball $B_2^d$ in $\R^d$. There is a positive constant $c_d$ depending only on $d$ such that for every $n \geq 1$ and real numbers $a_1, \dots, a_n$ with $\sum_{j=1}^n a_j^2 = 1$, we have
\[
\inf_{x \in B_2^d} p(x) \geq c_d,
\]
where $p$ is the density of the random vector $\,\sum_{j=1}^n a_jU_j$.
\end{theorem}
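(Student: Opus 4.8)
Since each $U_j$ is rotationally invariant, so is $a_jU_j$, hence so is $S:=\sum_{j=1}^n a_jU_j$; thus $p(x)=\psi(|x|)$ for some $\psi\colon[0,\infty)\to[0,\infty)$. The density of $a_jU_j$ equals $\vol(|a_j|B_2^d)^{-1}\,\1_{|a_j|B_2^d}$, a radially non-increasing function, and a convolution of radially non-increasing functions is again radially non-increasing: by the layer-cake formula it suffices to check this for two indicators of centred balls, where $\1_{B(0,r)}*\1_{B(0,s)}(x)=\vol\bigl(B(0,r)\cap B(x,s)\bigr)$ is manifestly non-increasing in $|x|$. Hence $\psi$ is non-increasing, so $\inf_{|x|<1}p(x)=\lim_{r\uparrow 1}\psi(r)$, which equals $1/\vol B_2^d$ when $n=1$ and $p(e_1)$ for any unit vector $e_1$ when $n\ge2$ (the density being continuous, as a convolution of at least two square-integrable densities). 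It therefore suffices to bound $p(e_1)$ from below for $n\ge2$.

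\textbf{Peeling off the dominant term.} Relabel so that $a_1^2=\max_j a_j^2$ and (flipping $U_1$) take $a_1>0$; write $S=a_1U_1+S'$ with $S'=\sum_{j\ge2}a_jU_j$ independent of $U_1$ and $\sigma'^2:=1-a_1^2=\sum_{j\ge2}a_j^2$. Since the density of $a_1U_1$ is the constant $\vol(a_1B_2^d)^{-1}$ on the ball $B(0,a_1)$,
\[
p(e_1)=\E\,\rho_{a_1U_1}(e_1-S')=\frac{1}{\vol B(e_1,a_1)}\int_{B(e_1,a_1)}\rho_{S'}(y)\,\dd y ,
\]
the average of the density of $S'$ over the ball $B(e_1,a_1)$. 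The geometric point is that this ball reaches to within distance $1-a_1=\sigma'^2/(1+a_1)$ of the origin, where $\rho_{S'}$ is largest (it too is radially non-increasing), while $S'$ is, by Prékopa's theorem, log-concave, rotationally invariant, with $\E|S'|^2=\tfrac{d}{d+2}\sigma'^2$. Consequently its density exceeds $c_d\,\sigma'^{-d}$ on a centred ball of radius $\ge\lambda_d\,\sigma'$, since an isotropic log-concave density is comparable, up to a $d$-th power of a universal constant, to its $L^\infty$ norm near the centre, and its $\tfrac12\|\cdot\|_\infty$-superlevel set is a centred ball of volume $\asymp_d\|\cdot\|_\infty^{-1}$. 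Intersecting with $B(e_1,a_1)$, whose trace on a small centred ball $B(0,u)$ with $1-a_1<u\le$ (a fixed multiple of) $\sigma'$ is a spherical cap occupying a definite proportion of $B(0,u)$, one obtains $p(e_1)\ge c_d$ as long as $\sigma'$ is bounded away from its extreme value $1$, i.e. as long as there is an honestly dominant coefficient. The complementary case $a_1\to1$ (one term of almost full weight, $S'$ concentrated near $0$) is handled directly: $S'\in B(e_1,a_1)$ iff $\langle S',e_1\rangle\ge\tfrac12(|S'|^2+\sigma'^2)$, a threshold of order $\sigma'^2$ lying far below the standard deviation $\asymp\sigma'$ of $\langle S',e_1\rangle$, so this event has probability close to $\tfrac12$; and since $a_1\le1$ the prefactor $\vol(a_1B_2^d)^{-1}\ge\vol(B_2^d)^{-1}$ costs nothing.

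\textbf{The main obstacle.} What the peeling does not reach is the regime in which \emph{no} coefficient dominates, $\max_j a_j^2$ small — precisely the central-limit regime, where $S$ is close to the Gaussian with covariance $\tfrac1{d+2}I_d$, whose density at $e_1$ is a positive constant $\gamma_d$ depending only on $d$. A naive induction on $n$, or iterated peeling, is hopeless here, since each step loses an absolute factor and the product of these factors tends to $0$. I expect this to be settled by a quantitative local limit theorem: writing $p(x)=(2\pi)^{-d}\int_{\R^d} e^{-i\langle t,x\rangle}\prod_j\phi(a_j|t|)\,\dd t$ with $\phi(r)=2^{d/2}\Gamma(\tfrac d2+1)\,J_{d/2}(r)/r^{d/2}$ the characteristic function of $U_1$, one compares $\prod_j\phi(a_j|t|)$ with $e^{-|t|^2/(2(d+2))}$; the error is absolutely integrable (the two largest factors already contribute $|t|^{-(d+1)}$ decay at infinity) and is $O_d(\max_j|a_j|)$ by standard local-limit estimates, so once $\max_j|a_j|$ is below a threshold $\theta_d$ one gets $p(x)\ge\gamma_d/2$ for all $|x|\le1$ simultaneously. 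Glueing this with the peeling argument — invoked when $\max_j a_j^2\ge\theta_d^2$, where $\sigma'$ is then bounded away from $1$ — would complete the proof. Producing these two-sided density estimates uniformly in $n$, and matching the thresholds, is the technical heart of the matter.
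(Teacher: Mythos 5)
You have correctly reduced the problem, by radial monotonicity of $p$, to bounding $p(e_1)$ from below for a unit vector $e_1$ and $n\ge 2$, and the algebraic identity $p(e_1)=\frac{1}{\vol B(e_1,a_1)}\int_{B(e_1,a_1)}\rho_{S'}$ is correct. However, the argument is not a proof, and you say so yourself twice (``I expect this to be settled by a quantitative local limit theorem'', ``matching the thresholds, is the technical heart of the matter''). Let me pinpoint why the gap is real and not merely a matter of bookkeeping.

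The peeling step needs the ball $B(e_1,a_1)$, whose closest point to the origin is at distance $1-a_1=\sigma'^2/(1+a_1)$, to overlap substantially with a centred ball on which $\rho_{S'}$ is within a constant factor of $\|\rho_{S'}\|_\infty$. For a log-concave rotationally invariant density with covariance $\frac{\sigma'^2}{d+2}I$, the $\tfrac12\|\cdot\|_\infty$-superlevel set is a centred ball of radius $\lambda_d\sigma'$, but for Gaussian-like profiles $\lambda_d\asymp 1/\sqrt{d}$ (compute it for the Gaussian: radius $\sigma'\sqrt{2\ln 2/(d+2)}$). So the overlap condition $1-a_1\lesssim\lambda_d\sigma'$ forces $\sigma'\lesssim\lambda_d(1+a_1)\lesssim 1/\sqrt{d}$, not merely ``$\sigma'$ bounded away from $1$''. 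Meanwhile your local-limit regime requires $\max_j|a_j|=a_1<\theta_d$, i.e.\ $\sigma'>\sqrt{1-\theta_d^2}$, which for any reasonable $\theta_d$ is close to $1$. Thus there is a wide middle range — $a_1$ of order one, say $a_1\in[\tfrac13,\tfrac23]$ — covered by neither regime, and it is not clear how to push the constants to close it. (In addition, the LLT claim, that the integrable Fourier error is $O_d(\max_j|a_j|)$ uniformly in $n$ and over $|x|\le 1$, is itself a nontrivial estimate that you state without proof.)

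The paper's route is entirely different and avoids this dichotomy altogether. It observes (via the Archimedes hat-box projection $S^{d+1}\subset\R^{d+2}\to B_2^d$) that the density $p$ admits the exact probabilistic representation $p(x)=\frac{1}{\omega_d}\E\bigl[|\sum a_j\xi_j|^{-d}\1_{\{|\sum a_j\xi_j|>|x|\}}\bigr]$ with $\xi_j$ i.i.d.\ uniform on $S^{d+1}$, from which, using the K\"onig--Rudelson bounds $\p{|\sum a_j\xi_j|\ge 1}\ge 0.1$ and $\p{|\sum a_j\xi_j|\ge t}\le t^{d+2}e^{(d+2)(1-t^2)/2}$, one gets $p(x)\ge\frac{1}{\omega_d}2^{-d}\p{1<|\sum a_j\xi_j|<2}\ge\frac{0.01}{\omega_d 2^d}$ uniformly in $x\in B_2^d$ and in the coefficient vector $(a_j)$, with no case split. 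If you want to complete a Fourier-analytic proof along your lines you would, at minimum, need to produce the uniform-in-$n$ comparison of $\prod_j\phi(a_j|t|)$ with its Gaussian limit and the matching of regimes you flag as missing; as it stands, the proposal identifies the right obstruction but does not overcome it.
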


In the $1$-dimensional case $d=1$, this was discovered by Bobkov and Chistyakov in \cite{BCh} (Proposition 3.2), as alluded to earlier (they obtained $c_1 = 0.00095..$). Motivated by applications to noncentral sections of the cube and polydisc, K\"onig and Rudelson in \cite{KR} studied the cases $d=1$ and $d=2$ and obtained that one can take $c_1 = \frac{1}{34} = 0.029..$ and $c_2 =  \frac{1}{27\pi} = 0.011..$. As we shall present, without too much additional work, their probabilistic approach essentially yields the claimed result for arbitrary $d$ with
\begin{equation}\label{eq:const-c_d}
c_d = \frac{1}{100\cdot2^d\omega_d},
\end{equation}
where as usual $\omega_d$ stands for the volume of the unit ball in $\R^d$,
\[
\omega_d = \vol_d(B_2^d) = \frac{\pi^{d/2}}{\Gamma(\tfrac{d}{2}+1)}.
\]

Pursuing a more geometric direction, we extend the Bobkov-Chistyakov result to a sharp bound for all even log-concave densities. 
Recall that a random vector $X$ in $\R^d$ with density $f$ is called log-concave when $f = e^{-\phi}$ for a convex function $\phi\colon \R^d \to {\red (-\infty,+\infty]}$ (for background, see for instance \cite{AGM}). 

\begin{theorem}\label{thm:low-bd-at-sqrt3}
Let $f\colon \R \to [0,+\infty)$ be an even log-concave probability density. Let 
\[
\sigma = \sqrt{\int_{\R} x^2f(x) \dd x}
\]
be its variance. Then
\begin{equation}\label{eq:f-at-sigma}
\sigma f(\sigma \sqrt{3}) \geq \frac{1}{\sqrt{2}}e^{-\sqrt{6}} = 0.061...
\end{equation}
(The equality is attained for the symmetric exponential density.)
\end{theorem}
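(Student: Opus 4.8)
The plan is to first reduce to $\sigma=1$ and then to a one-parameter family of extremal densities. By the scaling $f\mapsto \sigma f(\sigma\,\cdot)$ the claim is equivalent to: every even log-concave probability density $f$ on $\R$ with $\int_{\R}x^2f(x)\,\dd x=1$ satisfies $f(\sqrt3)\ge \frac{1}{\sqrt2}e^{-\sqrt6}$, with equality for $f(x)=\frac{1}{\sqrt2}e^{-\sqrt2|x|}$ (one checks directly that this $f$ is an even log-concave density of variance $1$ whose value at $\sqrt3$ is $\frac1{\sqrt2}e^{-\sqrt6}$).

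Next I would argue that it suffices to verify the bound for ``extremal'' densities. The functional $f\mapsto f(\sqrt3)$ is linear in $f$, and (after passing to the quotient by reflection $x\mapsto -x$ and invoking the Fradelizi--Gu\'edon description of extreme points of sets of log-concave probability measures under moment constraints) the infimum of this functional over $\mathcal C=\{f\ \text{even log-concave density},\ \int x^2f=1\}$ can be computed on densities $f$ for which $-\log f$ is piecewise affine on $[0,\infty)$ with at most two pieces, the support being allowed to be a bounded symmetric interval $[-R,R]$. Concretely, the relevant shapes are the truncated exponentials
\[
f_{\alpha,R}(x)=c_{\alpha,R}\,e^{-\alpha|x|}\1_{\{|x|\le R\}},\qquad \alpha\ge0,\ R\in(0,\infty],
\]
together with ``flat-top'' variants that are constant near $0$. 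A key elementary observation is that a density which is non-increasing in $|x|$ and supported in $[-R,R]$ has variance at most $R^2/3$, the uniform density being the unique maximiser; hence the constraint $\int x^2f=1$ forces $R\ge\sqrt3$, so every density in the reduced family is strictly positive at $\sqrt3$. This is exactly why $\sqrt3$, and no larger value, can appear in the statement: the uniform on $[-\sqrt3,\sqrt3]$ has variance $1$ and vanishes on $\{|x|>\sqrt3\}$, so the infimum over even log-concave densities of variance $1$ of $f(t)$ is $0$ for every $t>\sqrt3$.

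On the family $\{f_{\alpha,R}\}$, writing $c_{\alpha,R}=\frac{\alpha}{2(1-e^{-\alpha R})}$ and substituting $R'=\alpha R$, the constraint $\int x^2f_{\alpha,R}=1$ becomes $\alpha^2(1-e^{-R'})=\gamma(3,R')$ with $\gamma(3,\cdot)$ the lower incomplete Gamma function; this cuts out a curve running from the uniform density ($R'\to0$, i.e.\ $\alpha=0$, $R=\sqrt3$, $f(\sqrt3^-)=\frac{1}{2\sqrt3}$) to the symmetric exponential ($R'\to\infty$, $\alpha=\sqrt2$, $f(\sqrt3)=\frac{1}{\sqrt2}e^{-\sqrt6}$). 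Along the curve $f_{\alpha,R}(\sqrt3)=\frac{\alpha}{2(1-e^{-R'})}e^{-\sqrt3\,\alpha}$, and the theorem reduces to the one-variable calculus statement that this quantity is minimised at the exponential endpoint $R'\to\infty$; the flat-top variants give no smaller value, since inserting a flat piece near the origin only pushes the density towards the uniform, which is larger at $\sqrt3$. The special value $\sqrt3$ is precisely what makes the two endpoint values $\frac1{2\sqrt3}$ and $\frac1{\sqrt2}e^{-\sqrt6}$ the only competitors.

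The main obstacle, I expect, is the reduction step: the set of log-concave densities is not convex, so the passage to extremal densities is not a bare Krein--Milman argument and needs the localisation machinery for log-concave measures; moreover the evaluation map $f\mapsto f(\sqrt3)$ is not weakly continuous, so one must handle it with care, for instance by first replacing it with $f\mapsto\int f\,\dd\nu$ for approximate identities $\nu$ concentrating at $\sqrt3$ and passing to the limit. One must also check by hand that the few two-piece extremal shapes are indeed dominated by the exponential. Once the reduction is in place, the remaining optimisation is routine.
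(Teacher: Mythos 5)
Your overall strategy matches the paper's: scale to $\sigma=1$, use a localisation / degrees-of-freedom argument (the paper cites the same machinery, via \cite{MNT}, rather than Fradelizi--Gu\'edon, but these are interchangeable here) to reduce to densities $f(x)=c\big(\1_{[0,a]}(|x|)+e^{-\gamma(|x|-a)}\1_{[a,a+b]}(|x|)\big)$, and then carry out a finite-dimensional optimisation. Your identification of the two endpoints (uniform at $\sqrt3$ giving $\tfrac1{2\sqrt3}$, exponential giving $\tfrac1{\sqrt2}e^{-\sqrt6}$) and your explanation of why $\sqrt3$ is the critical abscissa are both correct.

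However, there is a genuine gap exactly where you wave your hands: the flat-top shapes with $a>0$. You dismiss them with ``inserting a flat piece near the origin only pushes the density towards the uniform, which is larger at $\sqrt3$,'' but this is not an argument. Inserting a flat piece of width $a$ changes the normalisation constant, the variance, and hence the abscissa $\sigma\sqrt3$ at which one evaluates, all simultaneously, and there is no monotone coupling that makes the assertion obvious. In the paper's proof this is precisely where the work lives: after introducing $A=a+1-e^{-b}$, $B=\sigma\sqrt3$ and $\psi(x)=x-1-\log x$, the inequality becomes $\psi(B)\le e^{-b}+\psi(A)+\psi(\sqrt6)$, and one must prove (Claim~2) that the slack $h(a,b)$ is nonincreasing in $b$ for each $a>0$, and (Claim~3) that $\lim_{b\to\infty}h(a,b)\ge0$ with equality iff $a=0$. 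Claim~2 in particular requires a careful sub-estimate ($B\ge1-e^{-b}$, which itself rests on a preliminary Claim~1 that $B\ge A$) and is not ``routine.'' Your proposal effectively only treats the $a=0$ slice, i.e.\ the one-parameter curve $f_{\alpha,R}$, where you correctly set up the incomplete-Gamma constraint, but even there the claim that the minimum is at $R'\to\infty$ is asserted rather than shown. To be complete you would need to prove, for the full two-parameter family, the analogue of Claims~2 and~3; as written the argument proves the inequality only on the boundary $a=0$ of the reduced family.
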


The example of the symmetric uniform distribution shows that in the parameter $\sigma\sqrt{3}$, constant $\sqrt{3}$ cannot be replaced with any larger number for such a lower bound to continue to hold (uniformly over all even log-concave densities). The parameter $\sigma\sqrt{3}$ can be loosely thought of as the \emph{effective support} of $f$, as  stems from the following basic lemma (see, e.g. Theorem 5 in \cite{MNT} for a generalisation to R\'enyi entropies). {\red Note that every even log-concave probability density on $\R$ is, in particular, nonincreasing on $[0,+\infty)$.}

\begin{lemma}\label{lm:eff-supp}
Let $f\colon \R \to [0,+\infty)$ be an even probability density of variance $1$. {\red Suppose that $f$ is nonincreasing on $[0,+\infty)$.} Then the support of $f$, that is the set $\,\mathrm{supp}(f) = \overline{\{x \in \R, \ f(x) > 0\}}$ contains the interval $[-\sqrt{3},\sqrt{3}]$.
\end{lemma}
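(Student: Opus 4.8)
The plan is to argue by contraposition. I will show that if an even log-concave density $f$ has support contained in a finite interval $[-b,b]$, then its variance is at most $b^2/3$. Granting this, note that the support of an even log-concave density is automatically a symmetric interval (its interior is convex since $f=e^{-\phi}$ with $\phi$ convex, and it is symmetric since $f$ is even), say $\overline{\{f>0\}}=[-b,b]$ with $b\in(0,+\infty]$; if $b=+\infty$ there is nothing to prove, and if $b<+\infty$ then variance $1$ forces $b^2/3\ge 1$, i.e.\ $b\ge\sqrt3$, so that $[-\sqrt3,\sqrt3]\subseteq[-b,b]=\mathrm{supp}(f)$.

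The first step is the elementary observation that an even log-concave density is non-increasing on $[0,b]$. Write $f=e^{h}$ with $h=\log f$ concave on $[-b,b]$ (with the convention $h=-\infty$ off the support) and even. From $0=\tfrac12 x+\tfrac12(-x)$ and concavity we get $h(0)\ge\tfrac12 h(x)+\tfrac12 h(-x)=h(x)$ for all $x$. Then for $0\le x<y\le b$, writing $x=(1-\tfrac{x}{y})\cdot 0+\tfrac{x}{y}\cdot y$ as a convex combination and using $h(0)\ge h(y)$, concavity gives $h(x)\ge(1-\tfrac{x}{y})h(y)+\tfrac{x}{y}h(y)=h(y)$. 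Hence $f$ is non-increasing on $[0,b]$.

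The second step quantifies how this monotonicity constrains the second moment. Since $f$ is even, $\int_0^b f=\tfrac12$, and since $f$ is non-increasing on $[0,b]$, the average of $f$ over $[0,s]$ dominates its average over $[0,b]$, so $\int_0^s f\ge\tfrac{s}{b}\int_0^b f=\tfrac{s}{2b}$ for every $s\in[0,b]$. Writing $x^2=\int_0^x 2s\,\dd s$ and using Fubini together with this bound,
\[
\int_0^b x^2 f(x)\,\dd x=\int_0^b 2s\Big(\tfrac12-\int_0^s f\Big)\dd s\le\int_0^b 2s\cdot\tfrac12\Big(1-\tfrac{s}{b}\Big)\dd s=\frac{b^2}{6},
\]
whence $\sigma^2=\int_{\R}x^2 f(x)\,\dd x=2\int_0^b x^2 f(x)\,\dd x\le\tfrac{b^2}{3}$, as claimed.

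I do not anticipate a serious obstacle; the only delicate points are the reduction (legitimate because the support is a symmetric interval) and the monotone-rearrangement inequality $\int_0^s f\ge\tfrac{s}{2b}$, which is precisely where log-concavity enters, via the monotonicity of $f$ on $[0,b]$. An alternative for the second step is to invoke the Choquet/Khinchin representation of a non-increasing density on $[0,b]$ as a mixture of uniform densities on $[0,t]$ with $t\le b$, each having second moment $t^2/3\le b^2/3$; this also makes transparent that the extremal case is the uniform density on $[-\sqrt3,\sqrt3]$, matching the sharpness remark after Theorem~\ref{thm:low-bd-at-sqrt3}.
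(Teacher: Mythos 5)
Your argument is correct, and it takes a genuinely different route from the paper's. Both proofs begin by noting that an even log-concave density is non-increasing on $[0,\infty)$ and reduce to that monotonicity. From there, the paper argues by contradiction: it compares $f$ with the uniform density $g$ of variance $1$ on $[-\sqrt3,\sqrt3]$, observes that two even non-increasing densities with the same second moment must cross exactly once on $(0,\infty)$, and derives a sign contradiction from the weighted integral $\int_0^\infty x^2(f-g)$. You instead prove the clean quantitative bound $\sigma^2\le b^2/3$ whenever $\mathrm{supp}(f)=[-b,b]$, using the elementary monotonicity inequality $\int_0^s f\ge \tfrac{s}{2b}$ and the layer-cake/Fubini identity $\int_0^b x^2 f = \int_0^b 2s\bigl(\tfrac12-\int_0^s f\bigr)\,\dd s$. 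Your approach avoids introducing a reference density and yields a self-contained moment estimate (with equality for the uniform density) that is slightly more informative than the paper's contradiction; the paper's crossing argument is shorter to state but does not isolate the underlying inequality. Both buy the same conclusion and both rely only on evenness and monotonicity, so they generalize identically.
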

\begin{proof}
Suppose that  $\mathrm{supp}(f) = [-a,a]$ with $a < \sqrt{3}$. Let $g(x) = \frac{1}{2\sqrt{3}}\1_{[-\sqrt3,\sqrt3]}(x)$ be the uniform density on $[-\sqrt3,\sqrt3]$ with variance $1$. By the monotonicity of $f$, $f$ intersects $g$ on $[0,+\infty)$ at a point $c \in [0,a]$, and $f-g \geq 0$ on $[0,c]$, $f-g \leq 0$ on {\red $[c,+\infty)$}. We get
\begin{align*}
0 = \int_0^\infty x^2(f(x)-g(x)) \dd x  \ &{\red <} \ c^2\int_0^c (f(x) - g(x)) \dd x + c^2{\red \int_c^\infty} (f(x) - g(x)) \dd x  \\
&= c^2\int_0^\infty (f-g) = 0,
\end{align*}
a contradiction {\red (the inequality is strict because in the second integral we have $f-g = -g = -\frac{1}{2\sqrt3}$ on $(a, \sqrt3)$).}
\end{proof}

Theorem \ref{thm:low-bd-at-sqrt3} readily yields a sharp lower bound for the volume of noncentral sections of isotropic symmetric convex bodies (on their \emph{effective support}). For a recent survey on this topic, see \cite{NT}. Recall that a convex body $K$ in $\R^d$ is called (centrally) \emph{symmetric} if $K = -K$ and in that special case it is called \emph{isotropic} if it has volume~$1$ and covariance matrix proportional to the identity matrix, \[
\left[\int_K x_ix_j \dd x\right]_{i,j \leq d} = L_K^2I_{d \times d},
\]
and the proportionality constant $L_K > 0$ is called the \emph{isotropic constant} of $K$.

\begin{corollary}\label{cor:noncent}
Let $K$ be a symmetric isotropic convex body in $\R^d$ with isotropic constant $L_K$. For every hyperplane $H$ in $\R^d$ with distance at most $L_K\sqrt{3}$ to the origin, we have
\begin{equation}\label{eq:noncent}
\vol_{d-1}(K \cap H) \geq \frac{1}{L_K}\frac{1}{\sqrt{2}}e^{-\sqrt{6}}.
\end{equation}
\end{corollary}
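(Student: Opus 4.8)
The plan is to reduce the volume of a noncentral hyperplane section of $K$ to a one-dimensional marginal density, and then invoke Theorem \ref{thm:low-bd-at-sqrt3}. Let me think about how to set this up cleanly.

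First, fix a hyperplane $H$ at distance $t \le L_K\sqrt 3$ from the origin. Write $H = \{x : \langle x, \theta\rangle = t\}$ for a unit vector $\theta$. Consider the random vector $X$ uniform on $K$ (which has density $\1_K$ since $\vol_d(K) = 1$) and its one-dimensional marginal $Y = \langle X,\theta\rangle$, whose density is exactly $x \mapsto \vol_{d-1}(K \cap \{\langle \cdot,\theta\rangle = x\})$ by Fubini. So proving \eqref{eq:noncent} amounts to bounding the density $f_Y$ of $Y$ at the point $t$ from below.

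Now $f_Y$ is an even log-concave density: evenness comes from $K = -K$, and log-concavity of marginals of log-concave densities is the Prékopa–Leindler theorem. Its variance is $\int_K \langle x,\theta\rangle^2\,\dd x = L_K^2$ by the isotropicity hypothesis, so $\sigma = L_K$. Since $t \le L_K\sqrt 3 = \sigma\sqrt 3$ and $f_Y$ is nonincreasing on $[0,\infty)$ (being even and log-concave), we get $f_Y(t) \ge f_Y(\sigma\sqrt 3)$. Theorem \ref{thm:low-bd-at-sqrt3} gives $\sigma f_Y(\sigma\sqrt 3) \ge \tfrac{1}{\sqrt 2}e^{-\sqrt 6}$, hence
\[
\vol_{d-1}(K \cap H) = f_Y(t) \ge f_Y(\sigma\sqrt3) \ge \frac{1}{\sigma}\frac{1}{\sqrt2}e^{-\sqrt6} = \frac{1}{L_K}\frac{1}{\sqrt2}e^{-\sqrt6},
\]
which is exactly \eqref{eq:noncent}.

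There is essentially no hard part here: the corollary is a direct translation of Theorem \ref{thm:low-bd-at-sqrt3} through the standard dictionary between hyperplane sections and one-dimensional marginals. The only minor point to be careful about is the monotonicity step — we need that $f_Y(t)$ for $t$ in the effective-support range is at least $f_Y$ evaluated at the boundary point $\sigma\sqrt3$, which uses that an even log-concave density is unimodal with mode at $0$. One should also note that if $H$ does not meet $K$ the bound is vacuous (the right-hand side is positive, but Lemma \ref{lm:eff-supp} applied to $f_Y/\sigma$ rescaled shows $[-\sigma\sqrt3,\sigma\sqrt3] \subseteq \mathrm{supp}(f_Y)$, so in fact $K \cap H \neq \emptyset$ whenever $t \le \sigma\sqrt3$, and the inequality is meaningful throughout the asserted range).
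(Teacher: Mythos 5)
Your proof is correct and follows essentially the same route as the paper: identify the section function with the marginal density of the uniform measure on $K$, note it is even and log-concave (you cite Pr\'ekopa--Leindler where the paper cites Brunn--Minkowski, but these give the same fact here), observe its variance is $L_K^2$, use unimodality to reduce to the boundary point $\sigma\sqrt3$, and invoke Theorem \ref{thm:low-bd-at-sqrt3}. The only addition beyond the paper's argument is your closing remark on the effective support, which is correct but not needed.
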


\begin{remark}\label{rem:noncent-sharp}
This bound is sharp, in that for every $\epsilon > 0$, there is $d_\epsilon$ and a symmetric isotropic convex body $K$ in $\R^{d_\epsilon}$ which admits a hyperplane $H$ at distance $L_{K}\sqrt{3}$ to the origin for which $\vol_{d-1}(K \cap H) < \frac{1}{L_K}\left(\frac{1}{\sqrt{2}}e^{-\sqrt{6}}+\epsilon\right).$
\end{remark}

{\red
\begin{remark}
In light of the very recent breakthrough of Klartag and Lehec from \cite{KlaLeh}, for all bodies $K$ in all dimensions, $L_K \leq C$ for a universal constant $C$. Moreover, it is well known that $L_K \geq \frac{1}{12}$ (see Section \ref{sec:reversals} for further details). As a result, every section of every isotropic convex body $K$ by a hyperplane at distance at most $\frac{\sqrt3}{12}$ (from the origin)  has volume bounded away from $0$ by a universal positive constant.
\end{remark}

\begin{remark}
When specialised to the unit volume cube $K = [-\frac12, \frac12]^d$ for which $L_K = \frac{1}{\sqrt{12}}$, we obtain that every section of $K$ by a hyperplane at distance at most $\frac12$ has volume at least $\sqrt{6}e^{-\sqrt6} = 0.21..$, i.e. we can take $c_1 = \sqrt{6}e^{-\sqrt6}$ in Theorem~\ref{thm:low-bd}, improving on the value of the numerical constant from \cite{KR}.
\end{remark}
}

\subsection{Subadditivity of R\'enyi entropy}

To state our second main result and elucidate on the connection between R\'enyi entropies and concentration function, we begin with recalling the necessary definitions. 

For a random vector $X$ in $\R^d$ with density $f$ on $\R^d$, and $p \in [0,+\infty]$, we define the $p$\emph{-R\'enyi entropy} of $X$ as 
    \[
    h_p(X) = \frac{1}{1-p}\log\left(\int_{\R^d} f(x)^p \dd x\right),
    \]
with the cases $p \in \{0,1,\infty\}$ \,treated by limiting expressions: $h_0(X) = \log \vol_d(\text{supp}(f))$, $h_1(X) = - \int_{\R^d}  f \log f$, and $h_\infty(X) \coloneqq - \log \|f\|_\infty$, provided the relevant integrals exist (in the Lebesgue sense).  We define the \emph{R\'enyi entropy power} to be
    \[
        N_p(X) = e^{2 h_p(X)/d}.
    \]
Finally,  the \emph{maximum functional} $M$ for $X$ is defined by
    \[
        M(X) = \|f\|_\infty
    \]
and we have
\[
N_\infty(X) = M(X)^{-2/d}.
\]
We observe that if $U$ is uniform on the unit ball $B_2^d$ and independent of $X$, then the concetration function of $X$ is up to scaling factors the maximum functional of the smoothed variable $X+\lambda U$, that is, plainly
\begin{equation}\label{eq:Q-M}
\begin{split}
Q_X(\lambda) = \sup_{x \in \R^d} \p{|X-x|\leq \lambda} &= \sup_{x \in \R^d} \int_{\R^d} \1_{\{|y-x|\leq \lambda\}} f(y) \dd y \\
&= \lambda^d\omega_d \,M(X+ \lambda U)
\end{split}
\end{equation}
and, consequently,
\begin{equation}\label{eq:Q-N}
N_\infty(X+\lambda U) = \omega_d^{2/d}\lambda^2 \,Q_X(\lambda)^{-2/d}.
\end{equation}
This relationship allows to rewrite the anti-concentration bound \eqref{eq:BCh} in terms of the maximum functional, or $\infty$-R\'enyi entropy power, of smoothed densities. It turns out that thanks to Theorem \ref{thm:low-bd}, the same continues to hold for $p$-R\'enyi entropies of random vectors in $\R^d$.

\begin{theorem} \label{thm:Renyi-anti-conc}
Let $p > 1$. For all independent random vectors $X_1, \dots, X_n$ in $\R^d$, their sum $S = \sum_{j=1}^n X_j$ and positive parameters $\lambda_1, \dots, \lambda_n$ with $\sum_{j=1}^n\lambda_j^2= 1$, we have
    \begin{equation}
        N_p\left(S +U_0\right) \geq \frac{1}{C_{p,d}}\sum_{j=1}^n N_p(X_j + \lambda_j U_j),
    \end{equation}
where $U_0, U_1, \dots, U_n$ are independent random vectors uniform on the unit ball in $\R^d$, also independent of the $X_j$'s. One can take $C_{p,d} = e\cdot 2^{\frac{2p}{p-1}\frac{d+7}{d}}$.
\end{theorem}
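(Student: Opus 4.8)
The plan is to route the estimate through the auxiliary vector $S+V$, where $V:=\sum_{j=1}^n\lambda_jU_j$ is independent of $S$. This intermediate object is useful for two reasons. First, $\sum_{j=1}^n(X_j+\lambda_jU_j)=S+V$ is a sum of the $n$ \emph{independent} vectors $Y_j:=X_j+\lambda_jU_j$, so a R\'enyi entropy power inequality should lower-bound $N_p(S+V)$ by a constant times $\sum_jN_p(Y_j)$. Second, because $\sum_j\lambda_j^2=1$, Theorem~\ref{thm:low-bd} controls the density of $V$ from below on $B_2^d$, which is exactly what is needed to compare $S+V$ back to the single-ball smoothing $S+U_0$. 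So I would split the proof into these two comparisons and then multiply the losses.

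For the first comparison I would argue as follows. Let $u_0=\omega_d^{-1}\1_{B_2^d}$ be the density of $U_0$ and $v$ the density of $V$. By Theorem~\ref{thm:low-bd} (with $a_j=\lambda_j$), $v\ge c_d$ on $B_2^d$, while $v\ge0$ everywhere; since $u_0$ is supported on $B_2^d$ with $u_0=\omega_d^{-1}$ there, this yields the pointwise domination $u_0\le(c_d\omega_d)^{-1}v$ on all of $\R^d$. Conditioning on $S$ — which is legitimate even though $S$ (or some $X_j$) may fail to be absolutely continuous, since $U_0$ and $V$ are — the densities $q_0$ of $S+U_0$ and $q_V$ of $S+V$ satisfy $q_0(x)=\int u_0(x-s)\,\dd\mu_S(s)\le(c_d\omega_d)^{-1}\int v(x-s)\,\dd\mu_S(s)=(c_d\omega_d)^{-1}q_V(x)$ for all $x$, where $\mu_S$ is the law of $S$. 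Hence $\|q_0\|_p\le(c_d\omega_d)^{-1}\|q_V\|_p$, and since $p>1$ makes the prefactor $\tfrac{1}{1-p}$ in $h_p$ negative, one gets $h_p(S+U_0)\ge h_p(S+V)-\tfrac{p}{p-1}\log\tfrac{1}{c_d\omega_d}$, i.e.
\[
N_p(S+U_0)\ \ge\ (c_d\omega_d)^{\frac{2p}{d(p-1)}}\,N_p(S+V)\ \ge\ 2^{-\frac{2p}{p-1}\frac{d+7}{d}}\,N_p(S+V),
\]
where the last step uses $(c_d\omega_d)^{-1}=100\cdot2^d\le2^{d+7}$.

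For the second comparison I would invoke the R\'enyi entropy power inequality of Bobkov and Chistyakov~\cite{BCh-Renyi}: for any $p\in(1,\infty]$, any dimension, and any number of independent summands, $N_p(\sum_jY_j)\ge c_p\sum_jN_p(Y_j)$ with a constant $c_p\ge\tfrac1e$ throughout the range $p\in(1,\infty]$ — that uniform bound is all I use. Applied to $Y_j=X_j+\lambda_jU_j$, which are independent with bounded densities and $\sum_jY_j=S+V$, this gives $N_p(S+V)\ge\tfrac1e\sum_jN_p(X_j+\lambda_jU_j)$; combined with the display above,
\[
N_p(S+U_0)\ \ge\ \frac1e\,2^{-\frac{2p}{p-1}\frac{d+7}{d}}\sum_{j=1}^nN_p(X_j+\lambda_jU_j),
\]
which is the assertion with $C_{p,d}=e\cdot2^{\frac{2p}{p-1}\frac{d+7}{d}}$.

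I do not expect a serious obstacle: once Theorem~\ref{thm:low-bd} (the real technical core) and the cited R\'enyi EPI are available, this is a soft reduction, and the only place the new pointwise bound enters is the domination $u_0\le(c_d\omega_d)^{-1}v$. The points that need care are minor: the conditioning step must be phrased through the measure $\mu_S$ rather than a density of $S$; the direction of the inequality in passing from $\|q_0\|_p$ to $h_p$ genuinely uses $p>1$ (so $p=1$ is not covered by this argument); one should record that $N_p(S+U_0)$ and $N_p(Y_j)$ are finite and positive (true since $S+U_0$ and the $Y_j$ have bounded densities), so that the inequality is meaningful; and the constant bookkeeping reduces to $c_d\omega_d=(100\cdot2^d)^{-1}$ together with $100<2^7$.
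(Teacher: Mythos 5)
Your proof is correct and follows essentially the same route as the paper's: use Theorem~\ref{thm:low-bd} to get the pointwise domination $u_0\le(c_d\omega_d)^{-1}v$, transfer it through convolution with the law of $S$ to $q_0\le(c_d\omega_d)^{-1}q_V$, convert that into a comparison of $N_p$ (using $p>1$ to flip the inequality), and then apply the Bobkov--Chistyakov R\'enyi EPI to $\sum_j(X_j+\lambda_jU_j)$. The only difference is that you spell out the conditioning step and the sign check in more detail than the paper does.
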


%Note that for large $d$, $C_d$ tends to $e^{-1} \left(\frac{p}{4^p}\right)^{\frac{1}{p-1}}$, yielding $\frac{1}{4e}$ when $p = \infty$.

As a corollary, we get an extension of \eqref{eq:BCh} to multivariate random variables.

\begin{corollary}\label{cor:anti-conc}
For all independent random vectors $X_1, \dots, X_n$ in $\R^d$, their sum $S = X_1 + \dots + X_n$, positive parameters $\lambda_1, \dots, \lambda_n$ and $\lambda \geq \left(\sum_{j=1}^n\lambda_j^2\right)^{1/2}$, we have
    \[
       Q_X(\lambda) \leq {\red \Big(2\lambda+ (\textstyle\sum \lambda_j^2)^{1/2}\Big)^d \,e^{d/2}2^{d+7}}\left(  \sum_{j=1}^n \lambda_j^2 \ Q_{X_j}(\lambda_j)^{-2/d} \right)^{-d/2}.
    \]
\end{corollary}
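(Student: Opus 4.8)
The plan is to deduce Corollary~\ref{cor:anti-conc} directly from Theorem~\ref{thm:Renyi-anti-conc}, using (i) the dictionary \eqref{eq:Q-M}--\eqref{eq:Q-N} between concentration functions and the maximal functional (equivalently, the $\infty$-R\'enyi entropy power) of ball-smoothed laws, (ii) a one-line Fubini estimate replacing the concentration function of $S$ at scale $\lambda$ by the maximal functional of $S$ smoothed at a single scale, and (iii) the trivial monotonicity $N_p\geq N_\infty$ of R\'enyi entropy powers, applied only where it helps.

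\emph{Step 1: reducing the smoothing scale.} Put $r=(\sum_j\lambda_j^2)^{1/2}$, so $\lambda\geq r$ by hypothesis, and let $U_0,\dots,U_n$ be independent uniform on $B_2^d$, also independent of the $X_j$. Since $B(u,r)\subseteq B(x,\lambda+r)$ whenever $u\in B(x,\lambda)$, Fubini gives
\[
\p{S\in B(x,\lambda)}\ \leq\ \int_{B(x,\lambda+r)}f_{S+rU_0}(z)\,\dd z\ \leq\ M(S+rU_0)\,\omega_d(\lambda+r)^d ,
\]
$f_{S+rU_0}$ being the density of $S+rU_0$. Taking the supremum over $x$ and recalling $M(S+rU_0)=N_\infty(S+rU_0)^{-d/2}$,
\[
Q_S(\lambda)\ \leq\ \omega_d\,(\lambda+r)^d\,N_\infty(S+rU_0)^{-d/2}.
\]
(Alternatively one may cover $\lambda B_2^d$ by at most $(2\lambda+1)^d$ unit balls to get $Q_S(\lambda)\leq(2\lambda+1)^d Q_S(1)$, which after normalising $r=1$ leads to the same conclusion.)

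\emph{Step 2: applying Theorem~\ref{thm:Renyi-anti-conc}.} By homogeneity---apply Theorem~\ref{thm:Renyi-anti-conc} to $X_j/r$ and $\lambda_j/r$ and use $N_p(Y/r)=r^{-2}N_p(Y)$---the theorem reads, for general $\lambda_j$,
\[
N_p(S+rU_0)\ \geq\ \frac1{C_{p,d}}\sum_{j=1}^n N_p(X_j+\lambda_jU_j),\qquad p>1 ,
\]
and letting $p\to\infty$ (legitimate: all the laws involved have bounded densities, being ball-convolutions, and $C_{p,d}$ decreases to $C_{\infty,d}=e\cdot 2^{2(d+7)/d}$) this persists at $p=\infty$. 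Using $N_p\geq N_\infty$ on the right together with $N_\infty(X_j+\lambda_jU_j)=\omega_d^{2/d}\lambda_j^2\,Q_{X_j}(\lambda_j)^{-2/d}$ from \eqref{eq:Q-N},
\[
N_\infty(S+rU_0)\ \geq\ \frac{\omega_d^{2/d}}{C_{\infty,d}}\sum_{j=1}^n \lambda_j^2\,Q_{X_j}(\lambda_j)^{-2/d}.
\]
Plugging this into Step~1 and using $(\lambda+r)^d\leq(2\lambda+1)^d$ (as $r\leq\lambda$),
\[
Q_S(\lambda)\ \leq\ \omega_d(\lambda+r)^d\Big(\tfrac{\omega_d^{2/d}}{C_{\infty,d}}\textstyle\sum_j\lambda_j^2Q_{X_j}(\lambda_j)^{-2/d}\Big)^{-d/2}\leq (2\lambda+1)^d\,C_{\infty,d}^{d/2}\Big(\textstyle\sum_j\lambda_j^2Q_{X_j}(\lambda_j)^{-2/d}\Big)^{-d/2}.
\]
Since $C_{\infty,d}^{d/2}=e^{d/2}2^{d+7}\leq e^{d/2}2^{p(d+7)/(p-1)}=C_{p,d}^{d/2}$ for every $p>1$, this is the asserted inequality.

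\emph{Main obstacle.} There is no deep point here: the content is Theorem~\ref{thm:Renyi-anti-conc}, and the rest is bookkeeping. What must be handled with care is, first, tracking the powers of $\omega_d$ through \eqref{eq:Q-M}--\eqref{eq:Q-N} so that the exponent $-d/2$ on $\sum_j\lambda_j^2Q_{X_j}(\lambda_j)^{-2/d}$ comes out precisely---this is exactly why it is the $\infty$-R\'enyi (maximal functional) instance of the theorem that one needs---and, second, the direction of R\'enyi monotonicity: since $p\mapsto N_p$ is non-increasing, $N_p\geq N_\infty$ helps on the $X_j$-terms but goes the wrong way on the $S+rU_0$-term, which is what forces the passage $p\to\infty$ on the left-hand side.
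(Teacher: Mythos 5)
Your proof is correct and takes essentially the same route as the paper: normalise/absorb the scale, invoke Theorem~\ref{thm:Renyi-anti-conc} at $p=\infty$, and translate $N_\infty$ back to $Q$ via \eqref{eq:Q-N}. The only cosmetic differences are that you control $Q_S(\lambda)$ by the inclusion $\{S\in B(x,\lambda)\}\subseteq\{S+rU_0\in B(x,\lambda+r)\}$ where the paper uses a covering of the $\lambda$-ball by at most $(2\lambda+1)^d$ unit balls, and that you pass to $p\to\infty$ by a limiting argument where the paper directly plugs in $p=\infty$; both yield the same final constant.
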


The next three sections contain the proofs of our main results Theorem \ref{thm:low-bd}, \ref{thm:low-bd-at-sqrt3} and \ref{thm:Renyi-anti-conc}. The last section is devoted to remarks on reverse bounds in the log-concave setting.

{\red 
\subsection*{Acknowledgements.} 
We are immensely indebted to an anonymous referee for their very careful reading of the manuscript and numerous invaluable suggestions, in particular for Remark \ref{rem:caveat}.}

\section{Sums of uniforms: Proof of Theorem \ref{thm:low-bd}}

Throughout this section we fix $d \geq 1$, let $U_1, U_2, \dots$ be  i.i.d. random vectors uniform on the Euclidean unit ball $B_2^d$ in $\R^d$ and let $\xi_1, \xi_2, \dots$ be i.i.d. random vectors uniform on the Euclidean unit sphere $S^{d+1}$ in $\R^{d+2}$. We also fix $n \geq 1$ and real numbers $a_1, \dots, a_n$ with $\sum_{j=1}^n a_j^2 = 1$. Theorem~\ref{thm:low-bd} holds trivially for $n=1$. Thus we shall assume in all the statements of this section that $n \geq 2$ with all the $a_j$ nonzero.

\subsection{A probabilistic formula}

One of the key ingredients is the following probabilistic formula for the density $p$ of $\sum_{j=1}^n a_jU_j$, established in \cite{KR} via a delicate Fourier analytic argument when $d=1, 2$ (Proposition 3.2 in \cite{KR}). We extend it to all dimensions and give an elementary direct and short proof.

\begin{lemma}\label{lm:density}
For every $x \in \R^d$, we have
\[
p(x) = \frac{1}{\omega_d} \,\E\Big[\Big|\sum_{j=1}^n a_j\xi_j\Big|^{-d}\1_{\left\{\left|\sum_{j=1}^n a_j\xi_j\right| > |x|\right\}}\Big].
\]
\end{lemma}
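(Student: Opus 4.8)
The plan is to realize each $U_j$ as a coordinate projection of $\xi_j$, thereby lifting the problem to a rotationally invariant vector in $\R^{d+2}$, and then to read off the density of the sum by conditioning on the length of that vector.

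First I would recall the elementary fact — the $\R^{d+2}$ version of Archimedes' hat-box theorem — that if $\Theta$ is uniform on $S^{d+1}\subset\R^{d+2}$ and $P\colon\R^{d+2}\to\R^d$ is the orthogonal projection onto the first $d$ coordinates, then $P\Theta$ is uniform on $B_2^d$. This follows at once from the standard formula for marginals of the uniform measure on $S^{m-1}$, whose projection onto $k$ coordinates has density proportional to $(1-|y|^2)^{(m-k-2)/2}\1_{B_2^k}(y)$; for $m=d+2$ and $k=d$ this is constant on $B_2^d$, with normalising constant $\omega_d^{-1}$. In particular $U_j\stackrel{d}{=}P\xi_j$, and hence, writing $V\coloneqq\sum_{j=1}^n a_j\xi_j$,
\[
\sum_{j=1}^n a_jU_j\ \stackrel{d}{=}\ \sum_{j=1}^n a_jP\xi_j\ =\ PV ,
\]
so it suffices to compute the density of $PV$.

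Next, since $V$ is a sum of independent rotationally invariant vectors in $\R^{d+2}$ it is itself rotationally invariant, so with $R\coloneqq|V|$ we may write $V=R\,\Theta$ where $\Theta$ is uniform on $S^{d+1}$ and independent of $R$; because $n\ge2$ and all $a_j\ne0$, the law of $R$ has a density, whence $R>0$ a.s. Then $PV=R\cdot P\Theta$ with $P\Theta$ uniform on $B_2^d$ (by the previous step) and independent of $R$. Conditioning on $R=r$, the vector $r\,P\Theta$ has density $x\mapsto\omega_d^{-1}r^{-d}\1_{\{|x|\le r\}}$, and integrating against the law of $R$ gives
\[
p(x)=\E\!\left[\frac{1}{\omega_d R^{d}}\,\1_{\{|x|\le R\}}\right]=\frac{1}{\omega_d}\,\E\!\left[\Big|\textstyle\sum_{j=1}^n a_j\xi_j\Big|^{-d}\1_{\{|\sum_j a_j\xi_j|>|x|\}}\right],
\]
the passage from $\le$ to $>$ being harmless since $\pp(R=|x|)=0$.

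The only point that requires genuine care is the Archimedes-type marginal identity used above; everything else is bookkeeping with the polar decomposition of a rotationally invariant vector and a one-line rescaling. For that identity I would either cite the classical marginal formula or include its short derivation (e.g. via the co-area formula or an explicit parametrization of $S^{d+1}$). The auxiliary facts — rotational invariance of $V$, independence of $R$ and $\Theta$, and absence of an atom of $R$ at the origin for $n\ge2$ — are all routine.
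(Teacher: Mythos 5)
Your proposal is correct and follows essentially the same route as the paper: realize each $U_j$ as the codimension-two coordinate projection of $\xi_j$ (the Archimedes hat-box lemma), observe $\sum a_jU_j\stackrel{d}{=}P(\sum a_j\xi_j)$, exploit rotational invariance to write $\sum a_j\xi_j$ in polar form, and read off the density by conditioning on the radius. The paper phrases the last step by computing $\pp(Y\in A)$ for Borel $A$ and identifying the integrand, while you pass directly to the conditional density of $R\cdot P\Theta$; this is only a cosmetic difference, not a different method.
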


The crux is an intimate connection between the uniform measure on the sphere and its projection to a codimension $2$ subspace which turns out to be uniform on the ball. This is folklore, and when specialised to two dimensional spheres amounts to the Archimedes' Hat-Box theorem.  We refer to Corollary 4 in \cite{BGMN} for a generalisation to $\ell_p$ balls.

\begin{lemma}\label{lm:Archimedes}
Let $d \geq 1$ and let $X = (X_1, \dots, X_d, X_{d+1}, X_{d+2})$ be a random vector uniform on the unit Euclidean sphere $S^{d+1}$ in $\R^{d+2}$. The random vector $\tilde X = (X_1, \dots, X_d)$  in  $\R^d$ is uniform on the unit Euclidean ball $B_2^d$.
\end{lemma}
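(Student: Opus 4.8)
The plan is to compute directly the law of $\tilde X$ by exhibiting a convenient parametrization of (almost all of) the sphere $S^{d+1}$ in which the first $d$ coordinates occur as free parameters. Write points of $\R^{d+2}$ as $(u,v)$ with $u \in \R^d$ and $v \in \R^2$, and use polar coordinates in the $v$-plane: for $u$ in the open unit ball of $\R^d$ and $\theta \in [0,2\pi)$, set $\rho = \rho(u) = \sqrt{1-|u|^2}$ and
\[
\Phi(u,\theta) = \big(u,\ \rho\cos\theta,\ \rho\sin\theta\big).
\]
Since $\rho(u) > 0$ on the open ball, $\Phi$ is a diffeomorphism from $B_2^d\times(\R/2\pi\Z)$ onto $S^{d+1}$ with the $\sigma$-null equatorial sphere $\{\,|u|=1\,\}$ removed, so it suffices to understand the pushforward of the surface measure on $S^{d+1}$ under $\Phi^{-1}$.

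The one computation that matters is the Gram determinant of $\Phi$. Using $\nabla_u\rho = -u/\rho$ one finds $\partial_{u_i}\Phi = \big(e_i, -\tfrac{u_i}{\rho}\cos\theta, -\tfrac{u_i}{\rho}\sin\theta\big)$ and $\partial_\theta\Phi = (0, -\rho\sin\theta, \rho\cos\theta)$, and since $\partial_{u_i}\Phi \cdot \partial_\theta\Phi = 0$ for every $i$, the metric tensor is block-diagonal,
\[
G = \begin{pmatrix} I_d + \rho^{-2}uu^{\mathsf T} & 0 \\ 0 & \rho^2 \end{pmatrix},
\]
where the upper block records $\partial_{u_i}\Phi\cdot\partial_{u_j}\Phi = \delta_{ij} + \rho^{-2}u_iu_j$. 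As $uu^{\mathsf T}$ is a rank-one matrix with nonzero eigenvalue $|u|^2$,
\[
\det G = \rho^2\det\!\big(I_d + \rho^{-2}uu^{\mathsf T}\big) = \rho^2\big(1 + \rho^{-2}|u|^2\big) = \rho^2 + |u|^2 = 1 .
\]
Hence the surface measure on $S^{d+1}$, transported to the parameter domain, is exactly Lebesgue measure $\dd u\,\dd\theta$ on $B_2^d\times[0,2\pi)$.

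Integrating out the angular variable $\theta$ then shows that the law of $\tilde X = (X_1,\dots,X_d)$ is proportional to Lebesgue measure on $B_2^d$, so $\tilde X$ is uniform on $B_2^d$, as claimed. There is no genuine obstacle here; the only point meriting a word of care is the removal of the $\sigma$-null equator, which is immaterial for the pushforward. (Equivalently one may argue through the coarea formula: the fibre of the projection above $u$ is a circle of radius $\rho$, while the associated normal Jacobian of the projection restricted to $S^{d+1}$ also equals $\rho$, so the two factors of $\rho$ cancel and the fibre integrals are constant in $u$.)
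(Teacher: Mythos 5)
Your proof is correct and follows essentially the same strategy as the paper: parametrize $S^{d+1}$ by the first $d$ coordinates together with the angle on the fiber circle, and show the surface element in these coordinates is constant. The paper exploits rotational symmetry in the first $d$ coordinates to reduce the Jacobian computation to the one-dimensional radial picture, while you compute the full Gram determinant explicitly; yours is a more detailed rendering of the same calculation.
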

\begin{proof}
Let $P\colon S^{d+1} \to B_2^d$ be the projection map $P(x_1, \dots, x_d,x_{d+1},x_{d+2}) = (x_1, \dots, x_d)$. The preimage of a point $x \in B_2^d$ with $|x| = r$ under $P$ is a circle $x_{d+1}^2+x_{d+2}^2 = 1-r^2$ of radius $\sqrt{1-r^2}$. Using cylindrical coordinates $(r,x_{d+1},x_{d+2})$, the preimage on $S^{d+1}$ of an infinitesimal volume element $dr$ under $P$ then has $(d+1)$-volume on $S^{d+1}$ equal to 
\[
2\pi \sqrt{1-r^2}\sqrt{(\dd (\sqrt{1-r^2}))^2+(\dd r)^2} = 2\pi \dd r,
\]
which is uniform on $B_2^d$ (i.e. does not depend on $r$). 
\end{proof}

\begin{proof}[Proof of Lemma \ref{lm:density}]
Let $X = \sum_{j=1}^n a_j\xi_j$, $Y = \sum_{j=1}^n a_jU_j$ and let $P\colon S^{d+1} \to B_2^d$ be the projection map $P(x_1, \dots, x_d,x_{d+1},x_{d+2}) = (x_1, \dots, x_d)$. Note that
$
P(X) = \sum_{j=1}^n a_jP(\xi_j),
$ and by Lemma \ref{lm:Archimedes}, each $P(\xi_j)$ has the same distribution as $U_j$. Therefore, $Y$ has the same distribution as $P(X)$. For a Borel set $A$ in $\R^d$ we thus have,
\[
\p{Y \in A} = \p{P(X) \in A} = \p{X \in A\times \R^2}.
\]
Since $X$ is rotationally invariant, we can write $X = |X|\theta$, where $\theta$ is a random vector uniform on $S^{d+1}$, independent of $X$. Using this independence, we condition on the values of $X$ and continue the calculation as follows
\[
\p{X \in A\times \R^2} = \E_X\pp_\theta\left(\theta \in \frac{1}{|X|}(A\times \R^2)\right) = \E_X\pp_\theta\left(\theta \in \left(\frac{1}{|X|}A\right)\times \R^2\right),
\]
since for dilates of the set $A \times \R^2$, we plainly have $\lambda(A\times \R^2) = (\lambda A) \times \R^2$, $\lambda > 0$. Using Lemma \ref{lm:Archimedes} again, and a change of variables, we obtain
\begin{align*}
\pp_\theta\left(\theta \in \left(\frac{1}{|X|}A\right)\times \R^2\right) &= \pp_\theta\left(P(\theta) \in \left(\frac{1}{|X|}A\right)\right) \\
&= \frac{1}{\omega_d}\int_{\R^d}\1_{\left\{x \in A/|X|, |x| \leq 1\right \}} \dd x \\
&= \frac{1}{\omega_d}\int_{A}|X|^{-d}\1_{\left\{|x| \leq |X|\right \}} \dd x.
\end{align*}
Consequently,
\[
\p{Y \in A} = \frac{1}{\omega_d}\int_{A}\Big(\E|X|^{-d}\1_{\left\{|X| \geq |x|\right\}}\Big)\dd x.
\]
This means that $Y$ has density on $\R^d$ given by $p(x) = \frac{1}{\omega_d}\E|X|^{-d}\1_{\left\{|X| \geq |x|\right\}}$.
\end{proof}

We mention in passing that, alternatively, Lemma \ref{lm:density} can also be derived from a result of Baernstein II and Culverhouse, (6.5) in \cite{BC}.

\begin{lemma}\label{lm:densities}
The random variables $|\sum_{j=1}^n a_j\xi_j|$ and $|\sum_{j=1}^n a_jU_j|$ have densities, say, $f:[0,+\infty)\to \R$ and $g: [0,+\infty)\to \R$, respectively, which satisfy
\[
g(r) = dr^{d-1}\int_r^\infty s^{-d}f(s) \dd s, \qquad r \geq 0.
\]
\end{lemma}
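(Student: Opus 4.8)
The plan is to read Lemma~\ref{lm:densities} as a polar-coordinate reformulation of Lemma~\ref{lm:density}. Write $X = \sum_{j=1}^n a_j\xi_j$ and $Y = \sum_{j=1}^n a_jU_j$. Since each $U_j$ is rotationally invariant, so is $Y$; hence the density $p$ of $Y$ supplied by Lemma~\ref{lm:density} is radial, $p(x) = \rho(|x|)$ with $\rho(r) = \frac{1}{\omega_d}\,\E\big[|X|^{-d}\1_{\{|X|>r\}}\big]$. First I would pass to polar coordinates: since the surface area of $rS^{d-1}$ is $d\omega_d r^{d-1}$, the variable $|Y|$ has density $g(r) = d\omega_d r^{d-1}\rho(r) = d r^{d-1}\,\E\big[|X|^{-d}\1_{\{|X|>r\}}\big]$.

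Next I would rewrite the expectation as an integral against the density $f$ of $|X|$. For that it has to be checked that $|X|$ (equivalently $X$) is absolutely continuous; under the standing hypotheses of this section ($n\ge 2$, all $a_j\ne 0$) this is already true for the two-term sum $a_1\xi_1 + a_2\xi_2$, whose Fourier transform decays like $|\cdot|^{-(d+1)}$ and therefore lies in $L^2(\R^{d+2})$ precisely because $d+2\ge 3$; convolving with the remaining independent summands and pushing forward by $v\mapsto|v|$ preserves absolute continuity, so $|X|$ has a density $f$ on $[0,\infty)$. Then $\E\big[|X|^{-d}\1_{\{|X|>r\}}\big] = \int_r^\infty s^{-d}f(s)\,\dd s$, and substituting into the previous display gives the claimed identity.

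I would also record the route that avoids quoting Lemma~\ref{lm:density}: its proof identifies $|Y|$ in law with $|X|\cdot|P(\theta)|$, where $\theta$ is uniform on $S^{d+1}$ independent of $|X|$ and $P$ is the projection onto $\R^d$; by Lemma~\ref{lm:Archimedes} the factor $|P(\theta)|$ has the law of the radius of a uniform point in $B_2^d$, i.e.\ density $d v^{d-1}$ on $(0,1)$, and conditioning on the value of $|X|$ and using the elementary scaling of densities under multiplication reproduces $g(r) = dr^{d-1}\int_r^\infty s^{-d}f(s)\,\dd s$. The main (and essentially only) obstacle is confirming that $f$ exists, i.e.\ the absolute continuity of $X$; granting that, the identity is a routine change of variables, with the factor $d\omega_d r^{d-1}$ being nothing but the polar Jacobian.
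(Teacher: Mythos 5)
Your proof is correct and follows essentially the same route as the paper: both start from the radial density formula in Lemma~\ref{lm:density}, convert to the density of $|Y|$ via the polar Jacobian $d\omega_d r^{d-1}$, and then rewrite $\E\big[|X|^{-d}\1_{\{|X|>r\}}\big]$ as $\int_r^\infty s^{-d}f(s)\,\dd s$. The one substantive difference is how you justify the existence of the density $f$ of $|X|$: you invoke the Fourier decay of the spherical characteristic function (the product of two copies decays like $|t|^{-(d+1)}$, hence is in $L^2(\R^{d+2})$ for $d\ge 1$), whereas the paper uses a more elementary conditioning argument, writing $|X|^2 = |S|^2 + 2a_n|S|\theta + a_n^2$ with $S=\sum_{j<n}a_j\xi_j$ and $\theta$ distributed as the first coordinate of $\xi_n$, independent of $S$; since $\theta$ has a density and $|S|>0$ a.s., so does $|X|^2$. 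Both are valid; the paper's is lower-tech, yours is arguably quicker if one already knows the Bessel asymptotics. Your second route, via the factorization $|Y|\stackrel{d}{=}|X|\cdot|P(\theta)|$ and the density $dv^{d-1}$ of $|P(\theta)|$ on $(0,1)$, is really the same computation unwound and reuses exactly the decomposition from the proof of Lemma~\ref{lm:density}, so it is not a genuinely independent path, but it is a clean way to see where the kernel $dr^{d-1}s^{-d}$ comes from.
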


Their proof relies on the Fourier inversion formula and a subtle calculation. Curiously, going the other way around, Lemma~\ref{lm:densities} can be readily obtained from Lemma~\ref{lm:density}. We sketch the argument for completeness.

\begin{proof}
Let $X = \sum_{j=1}^n a_j\xi_j$, $Y = \sum_{j=1}^n a_jU_j$. To see that $|X|$ has a density, let $S = \sum_{j=1}^{n-1} a_j\xi_j$ and note that $|X|^2 = |S|^2 + 2|S|a_n\theta + a_n^2$, where $\theta$ has the distribution of, say, the first coordiante of $\xi_n$ and is independent of $S$. Thus $|X|^2$ has a density. By Lemma \ref{lm:density}, the density $p$ of $Y$ is given by
\[
p(x) = \frac{1}{\omega_d}\E|X|^{-d}\1_{\left\{|X| \geq |x|\right\}} =  \frac{1}{\omega_d}\int_{|x|}^\infty s^{-d}f(s) \dd s.
\]
Integration in polar coordinates finishes the argument.
\end{proof}

\subsection{Probabilistic bounds}

We will use the following bounds established by K\"onig and Rudelson, see Propositions 5.1 and 5.4 in \cite{KR}.

\begin{proposition}[K\"onig-Rudelson, \cite{KR}]\label{prop:KR}
We have,
\begin{equation}\label{eq:KR1}
 \p{ \left|\sum_{j=1}^na_j\xi_j\right| \ge 1} \ge 0.1,
\end{equation}
and for $t > 1$,
\begin{equation}\label{eq:KR2}
\p{\left|\sum_{j=1}^n a_j\xi_j\right|\ge t } \le t^{d+2} \exp{\left( \frac{d+2}{2}(1-t^2)\right)}.
\end{equation}
\end{proposition}

Note that under our normalisation, $\E\left|\sum_{j=1}^na_j\xi_j\right|^2 = 1$. Inequality \eqref{eq:KR2} quantifies the strong concentration of $\sum_{j=1}^na_j\xi_j$. Bound \eqref{eq:KR1} is of anti-concentration type; it is sometimes referred to as Stein property, see \cite{Bur}, can robustly be approached by moment estimates (Paley-Zygmund-type inequalities), see \cite{Ver}, and has been very well studied for random signs, see \cite{DK, Ole}; for a generalisation to matricial coefficients, see Theorem 2 in \cite{CLT}.

We are now ready to prove Theorem \ref{thm:low-bd}. Since we do not try to optimise the values of constants involved, we forsake potentially more precise calculations in favour of simplicity of the ensuing arguments.

\begin{proof}[Proof of Theorem \ref{thm:low-bd}]
We fix $x \in B_2^d$ and let $X = |\sum a_j\xi_j|$. By Lemma \ref{lm:density}, we want to lower bound
\[
p(x) = \frac{1}{\omega_d}\E\left[X^{-d}\1_{\left\{X > |x|\right\}}\right].
\]
Crudely,
\[
\E\left[X^{-d}\1_{\left\{X > |x|\right\}}\right] \geq 2^{-d}\p{|x| < X < 2} \geq 2^{-d}\p{1 < X < 2},
\]
and by Proposition \ref{prop:KR}, 
\begin{align*}
\p{1 < X < 2} = \p{X \geq 1} - \p{X \geq 2} &\geq 0.1 - (2e^{-3/2})^{d+2}\\ &\geq 0.1 - (2e^{-3/2})^{3} > 0.01,
\end{align*}
thus finishing the proof.
\end{proof}

\section{Noncentral sections on effective support: Proofs of Theorem \ref{thm:low-bd-at-sqrt3} and Corollary \ref{cor:noncent}}

\subsection{Proof of Theorem \ref{thm:low-bd-at-sqrt3}}
Employing the localisation method of degrees of freedom for log-concave functions developed by Fradelizi and Gu\'edon in \cite{FG}, it suffices to prove the theorem for densities of the following form
\begin{equation}\label{eq:density-f}
f(x) = c\bigg(\1_{[0,a]}(|x|) + e^{-\gamma(|x|-a)}\1_{[a,a+b]}(|x|)\bigg), \qquad x \in \R,
\end{equation} 
where $a, b \geq 0$ not both $0$, $\gamma \geq 0$ and $c$ is determined by $\int_{\R} f = 1$. We refer to \cite{MNT} for the details of the argument. The difference is that \cite{MNT} deals with the minimisation of the entropy $f \mapsto -\int f \log f$ instead of the functional $f \mapsto f(\sigma \sqrt{3})$ under the constraint that $\sigma$ is fixed. 

{\red 
\begin{remark}\label{rem:caveat}
A little caveat is that our functional is \emph{not} strictly convex, but linear in $f$, whereas the argument in \cite{MNT} uses strict convexity. This can be rectified by considering, say, a functional $\Phi_\varepsilon(f) = \sigma f(\sigma\sqrt3) + \varepsilon\int f^2$, $\varepsilon > 0$.  Then \cite{MNT} gives that $\inf_{f \in \mathcal{F}_\sigma} \Phi_\varepsilon(f) = \inf_{f \in \tilde{\mathcal{F}}_\sigma} \Phi_\varepsilon(f)$, where $\mathcal{F}_\sigma$ are all log-concave densitities $f$ with variance $\sigma$ and $\tilde{\mathcal{F}}_\sigma$ are all such densities of the form \eqref{eq:density-f}. Below we shall show that $\inf_{f \in \tilde{\mathcal{F}}_\sigma} \Phi_0(f) = \frac{1}{\sqrt2}e^{-\sqrt6}$. Hence, 
\[
\inf_{f \in \mathcal{F}_\sigma} \Phi_\varepsilon(f) = \inf_{f \in \tilde{\mathcal{F}}_\sigma} \Phi_\epsilon(f) \geq  \inf_{f \in \tilde{\mathcal{F}}_\sigma} \Phi_0(f) = \frac{1}{\sqrt2}e^{-\sqrt6},
\]
so for every $f \in \mathcal{F}_\sigma$, $\Phi_0(f) = \Phi_\varepsilon(f) - \varepsilon\int f^2 \geq \frac{1}{\sqrt2}e^{-\sqrt6} - \varepsilon\int f^2$, and after letting $\varepsilon \to 0$, we also obtain $\Phi_0(f) \geq \frac{1}{\sqrt2}e^{-\sqrt6}$ for every $f \in \mathcal{F}_\sigma$, which is the desired conclusion from Theorem \ref{thm:low-bd-at-sqrt3}.
\end{remark}
}

Now we shall prove that $\Phi_0(f) \geq \frac{1}{\sqrt2}e^{-\sqrt6}$, for every $f\in \tilde{\mathcal{F}}_\sigma$.  First note that the functional $f \overset{\Phi_0}{\mapsto} \sigma f(\sigma\sqrt{3})$ is invariant under replacing $f(\cdot)$ with $\lambda f(\lambda \ \cdot )$ for any $\lambda > 0$. Therefore, it suffices to only consider $\gamma = 1$ (the case $\gamma = 0$ is formally contained in the case $b= 0$ with any $\gamma > 0$). For $f$ as above, we have
\[
1 = \int_{\R} f = 2c(a+ 1 - e^{-b})
\]
and
\[
\sigma^2 = 2c\left(\frac{a^3}{3} + \int_0^b (x+a)^2e^{-x}\dd x\right).
\]
For $a, b \geq 0$ not both $0$, we define
\[
A = A(a,b) = a + 1 - e^{-b}, \qquad B = B(a,b) = \sqrt{\frac{a^3 + 3\int_0^b (x+a)^2e^{-x}\dd x}{A}},
\]
so that $B = \sigma\sqrt{3}$ and $c = \frac{1}{2A}$. 

\textbf{Claim 1.} For all $a, b \geq 0$ not both $0$, we have $B(a,b) \geq A(a,b)$. In particular, $B(a, b) \geq a$ and $B(a, b) \geq 1-e^{-b}$.
\begin{proof}
The claim is equivalent to $AB^2 - A^3 \geq 0$. Note that for a fixed $a > 0$,
\[
\partial_b(AB^2 - A^3) = 3(a+b)^2e^{-b} - 3A^2e^{-b} = 3e^{-b}(a+b - A)(a+b+A) 
\] 
which is positive for every $b>0$, since $a+b - A = b - 1  + e^{-b} > 0$. Thus
\[
AB^2 - A^3 \geq (AB^2 - A^3)|_{b=0} = 0.\qedhere
\]
\end{proof}

By Claim 1, $B \geq a$, so when evaluating $f$ at $B$, we take the exponential bit of $f$, that is $f(B) = ce^{-(B-a)} = \frac{1}{2A}e^{a-B} = \frac{1}{2A}e^{A-1+e^{-b}-B}$
and \eqref{eq:f-at-sigma} becomes
\[
\frac{B}{A}e^{A-1+e^{-b}-B} \geq \sqrt{6}e^{-\sqrt{6}}.
\]
We introduce the function
\[
\psi(x) = x-1-\log x, \qquad x > 0,
\]
as it will be convenient to rewrite the last inequality equivalently by taking the logarithms of both sides,
\begin{equation}\label{eq:A-B}
\psi(B) \leq e^{-b}+\psi(A) + \psi(\sqrt{6}).
\end{equation}
Let $h(a,b)$ be the difference between the right hand side and the left hand side,
\[
h(a, b) = e^{-b}+\psi(A) + \psi(\sqrt{6}) - \psi(B).
\]
The proof is concluded through the following two claims.\qed

\textbf{Claim 2.} For every $a > 0$, $b \mapsto h(a,b)$ is nonincreasing on $(0, +\infty)$.

\textbf{Claim 3.} For every $a > 0$, we have $\lim_{b \to \infty} h(a,b) \geq 0$ with equality if and only if $a = 0$.

\begin{proof}[Proof of Claim 2.]
We fix $a > 0$ and differentiate with respect to $b$. We have,
\[
\partial_bA = e^{-b}
\]
and, using $B^2 = \frac{a^3 + 3\int_0^b(x+a)^2e^{-x}\dd x}{A}$,
\[
2B\partial_b B = \frac{3(a+b)^2e^{-b}}{A} - \frac{a^3 + 3\int_0^b(a+x)^2e^{-x}\dd x}{A^2}e^{-b} = \frac{e^{-b}}{A}\left(3(a+b)^2 - B^2\right).
\]
Plainly, $\psi'(x) = 1 - \frac{1}{x}$. Thus,
\[
e^{b}\partial_b h = -1 + \psi'(A) - \psi'(B)e^{b}\partial_bB = -\frac{1}{A} - \left(1 - \frac{1}{B}\right)\frac{1}{2AB}\left(3(a+b)^2 - B^2\right).
\]
Since $A > 0$, $\partial_b h \leq 0$ is therefore equivalent to the inequality
\[
(1-B)\big(3(a+b)^2-B^2\big) \leq 2B^2.
\]
We observe that $3(a+b)^2-B^2 \geq 0$. Indeed,
\[
AB^2 = a^3 + 3\int_0^b (a+x)^2e^{-x} \dd x \leq 3(a+b)^2a + 3(a+b)^2(1-e^{-b}) = 3(a+b)^2A.
\]
As a result, if $B \geq 1$, we conclude that $\partial_b h \leq 0$. When $B < 1$, $\partial_b h \leq 0$ is equivalent to the inequality
\[
B^2 \geq 3(a+b)^2\left(1 + \frac{2}{1-B}\right)^{-1}.
\]
The right hand side as a function of $B \in (0,1)$ is plainly decreasing. Using the bound $B \geq 1-e^{-b}$ from Claim 1, it thus suffices to show that
\[
B^2 \geq 3(a+b)^2\left(1 + 2e^{b}\right)^{-1},
\]
or, equivalently,
\[
AB^2 - 3A(a+b)^2\left(1 + 2e^{b}\right)^{-1} \geq 0.
\]
We fix $a > 0$. There is equality at $b= 0$. We take the derivative in $b$ of the left hand side which reads
\begin{align*}
3(a+b)^2e^{-b} &- 3e^{-b}(a+b)^2\left(1 + 2e^{b}\right)^{-1} - 6A(a+b)\left(1 + 2e^{b}\right)^{-1} \\
&+ 6A(a+b)^2\left(1 + 2e^{b}\right)^{-2}e^b \\
&=\frac{6(a+b)^2}{1+2e^b}\left(1 - \frac{A}{a+b} + \frac{Ae^b}{1+2e^b}\right).
\end{align*}
Clearly, $a+b \geq a + 1-e^{-b} = A$. Consequently the above expression is positive, which finishes the proof.
\end{proof}

\begin{proof}[Proof of Claim 3.]
We readily have,
\begin{align*}
A(a,\infty) &= a+1, \\
B(a,\infty) &= \sqrt{\frac{a^3 + 3(a^2+2a+2)}{a+1}} = \sqrt{\frac{(a+1)^3 + 3(a+1) + 2}{a+1}}.
\end{align*}
As a result, setting $x = a+1$ and 
\[
f(x) = \sqrt{x^2 + 3 + \frac2x}, \qquad x \geq 1,
\]
we obtain
\[
h(a,\infty) = \psi(x) + \psi(\sqrt{6}) - \psi\big(f(x)\big),
\]
where, recall, $\psi(u) = u - 1 - \log u$.
Note that the right hand side vanishes at $x=1$. To conclude, we show that its derivative is positive for every $x > 1$. The derivative reads
\begin{align*}
1 - \frac{1}{x} - \left(1 - \frac{1}{f(x)}\right)f'(x) &= 1 - \frac{1}{x} - \frac{f(x)-1}{f(x)^2}\left(x-\frac{1}{x^2}\right) \\
&= \frac{x-1}{x}\cdot\frac{f(x)-1}{f(x)^2}\left(\frac{f(x)^2}{f(x)-1} - x- 1 - \frac{1}{x} \right)
\end{align*}
Plainly, $f(x) > 1$. Moreover,
\[
\frac{f(x)^2}{f(x)-1} > f(x)+1 = \sqrt{x^2 + 3 + \frac{2}{x}} + 1 > \sqrt{x^2 + 2 + \frac{1}{x^2}} + 1 = x + \frac{1}{x}  +1,
\]
which shows that the derivative is positive and finishes the proof.
\end{proof}

\subsection{Proof of Corollary \ref{cor:noncent}}

This is a standard argument. We fix a unit vector $\theta$ in $\R^d$ and consider the section function by hyperplanes orthogonal to $\theta$,
\[
f(t) = \vol_{d-1}(K \cap (t\theta + \theta^\perp)), \qquad t \in \R.
\]
By the Brunn-Minkowski inequality, this defines a log-concave function. Since $K$ is symmetric, $f$ is even. In particular, it is nonincreasing on $[0,+\infty)$, so it suffices to show that $L_Kf(L_K\sqrt{3}) \geq \frac{1}{\sqrt{2}}e^{-\sqrt{6}}$. Since $K$ is of volume $1$, with isotropic constant $L_K$, we have $\int_{\R} f = 1$ and $\sigma = \sqrt{\int_{\R} t^2f(t) \dd t} = \sqrt{\int_{K} \scal{x}{\theta}^2 \dd x} =  L_K$. Theorem \ref{thm:low-bd-at-sqrt3} yields the result.

To see that Corollary \ref{cor:noncent} is indeed sharp, we present the following construction confirming Remark \ref{rem:noncent-sharp}.

\subsection{Proof of Remark \ref{rem:noncent-sharp}}

Given $\lambda = (\lambda_1, \lambda_2) \in (0,\infty)^2$, we define a double cone $K_\lambda$ in $\mathbb{R}^{d+1}$ by
\[
    K_\lambda = \left\{ (x,t) \in \R^d\times \R, \ |t| \leq \lambda_2 d, \ \ |x| \leq \lambda_1 \left( 1 - \frac{|t|}{\lambda_2 d} \right)
    \right\}.
\]
We have by direct computation
\[
\vol(K_\lambda) =  \frac{2 d \,\omega_d \,\lambda_1^d \lambda_2}{d+1}.
\]
Setting
\begin{align*}
    \lambda_1 &= L_d \sqrt{\frac{(d+2)(d+3)}{d+1}},
        \\
    \lambda_2 &= L_d \sqrt{\frac{(d+2)(d+3)}{2d^2} },
\end{align*}
with
\[
    L_d  = \left( \frac{ (d+1)^{d+2}}{2((d+3)(d+2))^{d+1} \omega_d^2} \right)^{\frac{1}{2(d+1)}},
    \]
the body $K_\lambda$ is in isotropic position with isotropic constant $L_d$, and, in particular, $\vol_{d+1}(K_\lambda) = 1$.  Moreover,
\[
   \frac{L_d}{\lambda_2} \xrightarrow{d \to \infty} \sqrt{2}
\]
and 
\[
\omega_d\lambda_1^{d+1} = \sqrt{\frac{d+1}{2}}.
\]
Thus, 
\begin{align*}
    L_d \vol_d( K_\lambda \cap \{ t = L_d \sqrt{3} \})
        &=
            L_d \,\omega_d \lambda_1^d  \left( 1 - \frac{L_d \sqrt{3}}{\lambda_2 d } \right)^d
                \\
        &=
           \sqrt{\frac{d+1}{(d+2)(d+3)}}\omega_d\lambda_1^{d+1} \left( 1 - \frac{L_d \sqrt{3}}{\lambda_2 d } \right)^d
                \\
        &=
           \sqrt{\frac{(d+1)^2}{2(d+2)(d+3)}}
            \left( 1 - \frac{L_d \sqrt{3}}{\lambda_2 d } \right)^d.
\end{align*}
Taking the limit we see that
\[
    \lim_{d \to \infty} L_d \vol(K_\lambda \cap \{ t = L_d \sqrt{3} \}) = \frac 1 {\sqrt{2}} e^{-\sqrt{6}}.
\]

\begin{remark}\label{rem:interpol}
Given $t_0 \in [0,\sqrt{3}]$, consider the problem
\begin{equation}\label{eq:inf-at-t0}
\inf\{ f(t_0), \ \ f \text{ is an even log-concave density on $\R$ with variance $1$}\}.
\end{equation}
Theorem \ref{thm:low-bd-at-sqrt3} asserts that at $t_0 = \sqrt{3}$ the infimum equals $\frac{1}{\sqrt{2}}e^{-\sqrt{6}}$ and is attained for the symmetric exponential density. It is a well-known result going back to Moriguti's work \cite{Mor} that for an arbitrary probability density $f$ on $\R$, we have $\|f\|_\infty^2 \geq \frac{1}{12}\left(\int_{\R} x^2f(x) \dd x\right)^{-1}$, with equality attained for symmetric uniform densities. As a result, when specialised to even log-concave densitites $f$ of variance~$1$, we get that at the point $t_0 = 0$ \eqref{eq:inf-at-t0} equals $\frac{1}{2\sqrt{3}}$ and is attained for the symmetric uniform density. 

{\red For a fixed $t_0 \in (0,\sqrt3)$, 
we again get from the proof of Theorem \ref{thm:low-bd-at-sqrt3} that this infimum is attained at a density of the form \eqref{eq:density-f}. However, we do not have a good prediction for the exact form of the minimizer (explicit numerical calculations show that for every $t_0 \geq 0.7$, the minimiser is neither uniform nor exponential which are outperformed e.g. by a truncated exponential density).}
%Using log-concavity, interpolating the previous two bounds gives
%\[
%f(t_0) \geq \left(\frac{1}{2\sqrt{3}}\right)^{1-t_0/\sqrt{3}}\left(\frac{1}{\sqrt{2}}e^{-\sqrt{6}}\right)^{t_0/\sqrt{3}}.
%\]
%Since the right hand side is strictly grater than the minimum of the two bounds, neither the symmetric uniform nor exponential density attains \eqref{eq:inf-at-t0}. 
\end{remark}

\section{R\'enyi entropy: Proof of Theorem \ref{thm:Renyi-anti-conc}}

The argument simply relies on combining Theorem \ref{thm:low-bd} with the following subadditivity result for R\'enyi entropies extending the classical entropy power inequality.

\begin{theorem}[Bobkov-Chistyakov, \cite{BCh-Renyi}]\label{thm:BCh-REPI}
Let $p \geq 1$. For independent random variables $X_1, \dots, X_n$ in $\mathbb{R}^d$, we have
\[
    N_p(X_1 + \cdots + X_n) \geq e^{-1} \sum_{i=1}^n N_p(X_i).
\]
\end{theorem}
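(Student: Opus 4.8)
Assume $p > 1$, the case $p = 1$ being the classical Shannon--Stam entropy power inequality, which even holds with constant $1$ in place of $e^{-1}$. Write $p' = \frac{p}{p-1}$, so that a random vector $Z$ with density $g$ satisfies $N_p(Z) = \|g\|_p^{-2p'/d}$. The plan is to reduce, by scaling, to the ``balanced'' case where all summands carry the same R\'enyi entropy power, and then to treat that case by replacing the summands with uniform distributions on Euclidean balls, where Lemma~\ref{lm:density} becomes available. Two elementary facts drive the reduction: homogeneity, $N_p(tZ) = t^2 N_p(Z)$, and monotonicity, $N_p(Z+W) \geq \max\{N_p(Z), N_p(W)\}$, the latter since $\|g * h\|_p \leq \|g\|_p\|h\|_1$. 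Monotonicity settles the degenerate configurations (if some $N_p(X_j) = +\infty$ the inequality is trivial; if some $N_p(X_j) = 0$ they add nothing to the right-hand side and may be absorbed into $S$). In the remaining case set $T = \sum_k N_p(X_k)$, $\beta_j = (N_p(X_j)/T)^{1/2}$ and $Z_j = X_j/(\beta_j\sqrt{T})$; then $N_p(Z_j) = 1$ for all $j$, $\sum_j \beta_j^2 = 1$ and $S = \sqrt{T}\sum_j \beta_j Z_j$, so the theorem becomes the normalised statement
\begin{equation}\label{eq:repi-bal}
N_p\Big(\textstyle\sum_{j=1}^n \beta_j Z_j\Big) \geq \tfrac1e \qquad \text{whenever } N_p(Z_j) = 1 \text{ for every } j \text{ and } \textstyle\sum_j \beta_j^2 = 1.
\end{equation}

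As $N_p(Z_j) = 1$ means $\|f_{Z_j}\|_p = 1$, \eqref{eq:repi-bal} is the assertion that $\|f\|_p \leq e^{d/(2p')}$ for $f$ the density of $\sum_j \beta_j Z_j$. I would first dispatch $p = \infty$, where $\|f\|_\infty = M(\sum_j \beta_j Z_j)$ and $\|f_{Z_j}\|_\infty = 1$. By Rogozin's rearrangement inequality for maxima of convolutions, replacing each $Z_j$ by the uniform distribution on a ball of radius $\rho = \omega_d^{-1/d}$ (the density with the same sup-value $1$) can only increase the maximal density of the sum; the sum then equals $\rho\sum_j \beta_j U_j$ with $U_j$ uniform on $B_2^d$, and since $\sum_j \beta_j^2 = 1$, Lemma~\ref{lm:density} gives its density --- maximal at the origin --- the value $\rho^{-d}\omega_d^{-1}\,\E\big|\sum_j \beta_j \xi_j\big|^{-d} = \E\big|\sum_j \beta_j \xi_j\big|^{-d}$, with $\xi_j$ i.i.d.\ uniform on $S^{d+1} \subset \R^{d+2}$ as in Section~2. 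Hence \eqref{eq:repi-bal} at $p = \infty$ is exactly the dimensional density bound
\begin{equation}\label{eq:repi-dual}
\E\Big|\textstyle\sum_{j=1}^n \beta_j \xi_j\Big|^{-d} \leq e^{d/2}, \qquad \textstyle\sum_j \beta_j^2 = 1,
\end{equation}
a ``dual'' companion to Theorem~\ref{thm:low-bd}. For general $p \in (1,\infty)$ I would run the parallel argument: compare with uniforms on balls matched in $L^p$-norm (again forcing radius $\rho = \omega_d^{-1/d}$), and use $\|g\|_p \leq \|g\|_\infty^{1/p'}\|g\|_1^{1/p}$ with $\|f_{\rho W}\|_p = \rho^{-d/p'}\|f_W\|_p$ to reduce \eqref{eq:repi-bal} to the very same inequality \eqref{eq:repi-dual}. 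Should an $L^p$ rearrangement principle be unavailable, the sharp Babenko--Beckner form of Young's convolution inequality, with the exponents optimised, yields \eqref{eq:repi-bal} directly, the constant $\tfrac1e$ falling out of that optimisation.

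It remains to establish \eqref{eq:repi-dual}. The vector $\sum_j \beta_j \xi_j$ lives in $\R^{d+2}$ with $\E\big|\sum_j \beta_j \xi_j\big|^{2} = \sum_j \beta_j^2 = 1$, so $R := \big|\sum_j \beta_j \xi_j\big|$ concentrates around $1$ and, the ambient dimension being $d+2$, its small-ball probabilities are small enough (by the same spherical-projection mechanism as in Lemma~\ref{lm:Archimedes}) that $\E R^{-d}$ is finite. I would bound it by splitting at $R = 1$: the part on $\{R \geq 1\}$ contributes at most $1$, and the part on $\{R < 1\}$ is handled by integrating the small-ball bound, controlled together with the upper-tail estimate of Proposition~\ref{prop:KR}. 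The extreme configuration should be the maximally spread one --- all $\beta_j$ equal, $n \to \infty$ --- where the central limit theorem gives $R^2 \Rightarrow \tfrac{1}{d+2}\chi^2_{d+2}$ and $\E R^{-d} \to \big(\tfrac{d+2}{2}\big)^{d/2}/\Gamma(\tfrac{d}{2}+1)$, which is $\leq e^{d/2}$ by Stirling. The step I expect to be the main obstacle is precisely this: showing the maximally spread configuration is worst is delicate, since $\E R^{-d}$ need not move monotonically as the weights spread (already for $d = 1$ one sees non-monotone behaviour), so a robust proof should bound $\E R^{-d}$ uniformly through the explicit tail estimates rather than try to identify the extremiser. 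The other, lesser obstacle is the $L^p$ comparison step for $p < \infty$, circumvented via sharp Young as above; either way, the heart of the matter is the dimensional bound \eqref{eq:repi-dual}.
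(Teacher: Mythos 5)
The paper does not prove this statement itself --- it is quoted from Bobkov--Chistyakov \cite{BCh-Renyi}, whose actual proof is a direct application of the sharp (Beckner--Brascamp--Lieb) form of Young's convolution inequality with optimised exponents, i.e.\ precisely the route you relegate to a one-sentence fallback. Your primary line of attack, as written, has two genuine gaps. First, the inequality $\E\big|\sum_j\beta_j\xi_j\big|^{-d}\le e^{d/2}$, which you rightly identify as the heart of the matter, is asserted rather than proved, and the tools you invoke cannot deliver it: Proposition~\ref{prop:KR} gives a lower bound on $\p{R\ge 1}$ and an upper-tail bound for $t>1$, whereas controlling $\E\big[R^{-d}\1_{\{R<1\}}\big]$ requires a small-ball estimate of the form $\p{R\le s}\lesssim s^{d+\varepsilon}$, which appears nowhere in the paper. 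Worse, the supremum of $\E R^{-d}$ over all admissible weights equals $\big(\tfrac{d+2}{2}\big)^{d/2}/\Gamma(\tfrac{d}{2}+1)$ (attained in the CLT limit; this is exactly the sharp constant of \cite{MMX}), which is only a factor of order $e/\sqrt{\pi d}$ below your target $e^{d/2}$ --- for $d=1$ the two numbers are $1.38$ versus $1.65$ --- so there is little slack for crude estimates, and a bound uniform in $n$ and in the weights is essentially equivalent to the $p=\infty$ case of the theorem itself.

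Second, the reduction of $p\in(1,\infty)$ to $p=\infty$ does not go through. Normalising $\|f_{Z_j}\|_p=1$ yields only $\|f_{Z_j}\|_\infty\ge 1$, which is the wrong direction for invoking the $\infty$-case; there is no Rogozin-type rearrangement theorem for $L^p$ norms of convolutions when $p<\infty$ (the extremisers in sharp Young's inequality are Gaussians, not indicators of balls); and the interpolation $\|g\|_p\le\|g\|_\infty^{1/p'}\|g\|_1^{1/p}$ applied to the density of the sum presupposes exactly the $L^\infty$ control you lack. What is correct and useful in your write-up: the homogeneity/monotonicity reduction to the balanced case $N_p(Z_j)=1$, $\sum_j\beta_j^2=1$; the deferral of $p=1$ to the classical entropy power inequality; and, for $p=\infty$ only, the rearrangement reduction to uniforms on balls together with the evaluation of the density of the sum at the origin via Lemma~\ref{lm:density}, which is a legitimate (though not the original) route to the $\infty$-case provided one supplies the missing small-ball input. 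To complete the proof for all $p>1$ you should carry out the sharp Young computation you mention in passing, with exponents $p_j$ tied by $1+\tfrac1p=\sum_j\tfrac1{p_j}$ and optimised as in \cite{BCh-Renyi}; that argument yields the stated bound directly (indeed with the better constant $e^{-1}p^{1/(p-1)}$ mentioned after the theorem) and bypasses both obstacles above.
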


In fact, they obtained the better constant $c_p = e^{-1}p^{1/(p-1)}$ in place of $e^{-1}$. Moreover, as established in \cite{MMX}, the case $p = \infty$ admits an optimal dimensionally dependent constant $c_\infty(d)= \frac{\Gamma(\frac d 2 + 1)^{2/d}}{\frac d 2 +1}$, for $d \geq 2$.  However, for simplicity of expression, and as our other computations do not attempt to approach optimal constants, nor do the larger constants affect the asymptotics of corollaries to come, we will not make use of this sharpening.

\begin{proof}[Proof of Theorem \ref{thm:Renyi-anti-conc}]
By Theorem \ref{thm:low-bd} and \eqref{eq:const-c_d}, the density function of $\sum_{j=1}^n \lambda_jU_j$ is bounded below by $\frac{1}{100\cdot 2^d}$ times the density function of $U_0$.  It follows that $\sum_{j=1}^n (X_j + \lambda_j U_j)$ has a density function bounded \emph{pointwise} below by $\frac{1}{100\cdot 2^d}$ times the density function of $S+U_0$. Thus,
    \[
        N_p(S+ U_0) \geq (100\cdot 2^d)^{-\frac{2p}{d(p-1)}} N_p\left( \sum_{j=1}^n (X_j + \lambda_j U_j) \right).
    \]
    By Theorem \ref{thm:BCh-REPI},
    \[
        N_p\left( \sum_{j=1}^n (X_j + \lambda_j U_j) \right) \geq e^{-1} \sum_{j=1}^n N_p(X_j + \lambda_j U_j).
    \]
    Combining the two inequalities yields the result with 
\[
C_{p,d} = e\cdot(100\cdot 2^d)^{\frac{2p}{d(p-1)}} < e\cdot 2^{\frac{2p}{p-1}\frac{d+7}{d}}.
\]
The same argument can be applied with sharpened constants in the case~$p = \infty$.
\end{proof}

\begin{proof}[Proof of Corollary \ref{cor:anti-conc}]
By homogeneity, we can assume that $\sum_{j=1}^n \lambda_j^2 = 1$. When $\lambda = 1$, in view of \eqref{eq:Q-N}, the corollary follows immediately with constant $(e \cdot {\red 2^{2\frac{d+7}{d}}})^{d/2} = e^{d/2}{\red 2^{d+7}}$ by setting $p=\infty$ in Theorem \ref{thm:Renyi-anti-conc}. When $\lambda \geq 1$, using the union bound, we get $Q_X(\lambda) \leq (2\lambda + 1)^dQ_X(1)$ because by a standard volumetric argument a ball of radius $\lambda \geq 1$ can be covered by at most $(2\lambda+1)^d$ unit balls (see, e.g. Theorem 4.1.13 in \cite{AGM}), and the corollary follows from the previous case.
\end{proof}

\section{Reversals under log-concavity}\label{sec:reversals}

It turns out that in the one dimensional case, the variance of a log-concave random variable $X$ is a \emph{good proxy} for its maximum functional, more precisely $\frac{1}{12} \leq \Var(X)M(X)^2 \leq 1$, see Proposition 2.1 in \cite{BCh}. Building on this and the additivity of variance under independence, Bobkov and Chistyakov (\cite{BCh}, Corollary 2.2) derived two-sided matching bounds on the concentration function of sums of independent log-concave random variables.
%, see also \cite{} for further results in this direction. 

In higher dimensions, such a proxy with \emph{good tensorisation} properties seems to be a holy grail. If, however, we restrict our attention to isotropic random vectors, that is the centred ones with identity covariance matrix, then the maximum functional is directly related to the isotropic constant, which is well-studied in geometric functional analysis (see e.g. \cite{AGM, BGVV}). 

More specifically, if $X$ is a random vector in $\R^d$, its isotropic constant $L_X$ is defined to be
\[
L_X = (\det[\text{Cov}(X)])^{\frac{1}{2d}}M(X)^{\frac{1}{d}}
\]

By a standard argument, $L_X \geq \kappa_d$, where $\kappa_d$ is the isotropic constant of a random vector uniform on the unit volume Euclidean ball $\omega_d^{-1/d}B_2^d$. Moreover, $\kappa_d \geq \frac{1}{12}$. Let 
\[
\mathscr{K}_d = \sup L_X,\]
be the supremum taken over all log-concave isotropic random vectors $X$ in $\R^d$. Bourgain's famous slicing conjecture originating in \cite{Bo} asked whether $\mathscr{K}_d$ is upper-bounded by a universal constant, see also \cite{AGM, BGVV, KM}. {\red A series of recent breakthroughs culminated in the work of Klartag and Lehec \cite{KlaLeh}, who confirm the slicing conjecture. They prove that, indeed, $\mathscr{K}_d = O(1)$. For an outlook of the recent advancements, see references in \cite{KlaLeh}, as well as a follow-up work of Bizeul \cite{Biz}.}
%for a partially different, shorter proof 

Since covariance matrices add up for sums of independent random variables, we get two-sided bounds as in the one-dimensional case.

\begin{theorem}\label{thm:Q-log-conc}
Let $X_1, \dots, X_n$ be independent log-concave random vectors in $\R^d$ and $S$ be their sum. For $\lambda > 0$, we have
\[
\frac{c^d\omega_d\lambda^d}{\left(\det\left[\frac{\lambda^2}{{\red d+2}}I + \sum_{j=1}^n \mathrm{Cov}(X_j)\right]\right)^{1/2}} \leq Q_S(\lambda) \leq \frac{C^d\omega_d\lambda^d}{\left(\det\left[\frac{\lambda^2}{{\red d+2}}I + \sum_{j=1}^n \mathrm{Cov}(X_j)\right]\right)^{1/2}},
\]
{\red where $c, C > 0$ are universal constants.}
\end{theorem}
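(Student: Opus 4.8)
The plan is to pass from $Q_S$ to a maximum functional via \eqref{eq:Q-M}, observe that smoothing by a Euclidean ball keeps us inside the log-concave class, and then use that for a log-concave vector the maximum functional is determined by the covariance matrix up to its isotropic constant, which is universally bounded below by $\kappa_d$ and above by $\mathscr{K}_d$.

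Concretely, let $U$ be uniform on $B_2^d$ and independent of $X_1, \dots, X_n$, and set $Y = S + \lambda U = X_1 + \dots + X_n + \lambda U$; by \eqref{eq:Q-M} we have $Q_S(\lambda) = \lambda^d \omega_d M(Y)$. Each summand of $Y$ has a log-concave density --- the $X_j$ by hypothesis, and $\lambda U$ because it is uniform on the convex body $\lambda B_2^d$ --- so $Y$ has a log-concave density by the Pr\'ekopa--Leindler theorem, which asserts that a convolution of log-concave densities is log-concave. For $\lambda > 0$ the covariance $\mathrm{Cov}(Y) = \sum_{j=1}^n \mathrm{Cov}(X_j) + \lambda^2 \mathrm{Cov}(U)$ is positive definite, since already $\mathrm{Cov}(\lambda U)$ is; and $\mathrm{Cov}(U)$ is a scalar multiple of the identity by rotational invariance, which identifies $\mathrm{Cov}(Y)$ with the matrix appearing under the determinant in the statement.

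Then I would feed $Y$ into the defining identity of the isotropic constant, $M(Y) = L_Y^d\,(\det \mathrm{Cov}(Y))^{-1/2}$, and use the affine invariance of $L_Y$ together with $\kappa_d \le L_Y \le \mathscr{K}_d$: the lower bound is the standard fact (recalled above) that the Euclidean ball minimises the isotropic constant, and the upper bound is the definition of $\mathscr{K}_d$ applied to an isotropic affine image of $Y$. Substituting into $Q_S(\lambda) = \lambda^d \omega_d M(Y)$ sandwiches $Q_S(\lambda)$ between $\kappa_d^d$ and $\mathscr{K}_d^d$ times $\omega_d \lambda^d (\det \mathrm{Cov}(Y))^{-1/2}$, which is exactly the asserted pair of inequalities. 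I do not expect a genuine obstacle here --- the argument is an assembly of \eqref{eq:Q-M}, Pr\'ekopa--Leindler, and the definition and standard bounds for the isotropic constant --- so the only things that merit a moment's care are the nondegeneracy of $\mathrm{Cov}(Y)$, so that the determinant formula for $M$ is meaningful (automatic for $\lambda > 0$, since a log-concave density of positive mass is supported on a set with nonempty interior), and the transfer of $\kappa_d \le L \le \mathscr{K}_d$, typically stated for isotropic vectors, to the non-isotropic $Y$ through the affine invariance of $L_Y$.
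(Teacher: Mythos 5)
Your proposal is correct and follows essentially the same route as the paper: rewrite $Q_S(\lambda) = \lambda^d\omega_d M(S+\lambda U)$ via \eqref{eq:Q-M}, observe that $S+\lambda U$ remains log-concave, sandwich its maximum functional using $\kappa_d \le L_X \le \mathscr{K}_d$ together with the definition of $L_X$, and compute $\mathrm{Cov}(S+\lambda U)$ by additivity. The extra care you take with the affine invariance of the isotropic constant (needed since $S+\lambda U$ is not a priori isotropic) is implicit in the paper's ``by the definitions'' step.
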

\begin{proof}
For a log-concave random vector $X$, by the definitions of the isotropic constant and constants $\kappa_d$ and $\mathscr{K}_d$, we have
\[
\frac{\kappa_ d^d}{\sqrt{\det[\mathrm{Cov}(X)]}} \leq M(X) \leq \frac{\mathscr{K}_ d^d}{\sqrt{\det[\mathrm{Cov}(X)]}}.
\]
{\red Moreover, as discussed above, $c \leq \kappa_d \leq \mathscr{K}_d \leq C$ for some universal positive constants $c$ and $C$.}
Crucially, sums of independent log-concave random vectors are log-concave and uniform distributions on convex sets are log-concave. Therefore, we can apply this double-sided bound to $X = S + \lambda U$, where $U$ is a random vector uniform on the unit ball independent of the $X_j$'s. Using \eqref{eq:Q-M} and noting that 
\[
\mathrm{Cov}(X) = \mathrm{Cov}(\lambda U) + \sum_{j=1}^n \mathrm{Cov}(X_j) = \lambda^2{\red \frac{1}{d+2}}I + \sum_{j=1}^n \mathrm{Cov}(X_j),
\]
where $I$ stands for the $d \times d$ identity matrix, we arrive at the desired bounds.
\end{proof}

\end{document}